\newcommand{\B}[1]{{\mathbf #1}}
\newcommand{\F}[1]{{\mathfrak #1}}
\newtheorem{theorem}[equation]{Theorem}%[section]
\newtheorem{corollary}[equation]{Corollary}
\newtheorem{lemma}[equation]{Lemma}
\newtheorem{proposition}[equation]{Proposition}
\theoremstyle{definition}
\newtheorem{example}[equation]{Example}
\theoremstyle{remark}
\newtheorem{remark}[equation]{Remark}
\numberwithin{equation}{section}
\numberwithin{figure}{section}
\numberwithin{table}{section}
\newcommand\OP{\operatorname}
\newcommand\Diff{\OP{Diff}}
\newcommand\Homeo{\OP{Homeo}}
\newcommand\Hom{\OP{Hom}}
\title{On distortion in groups of homeomorphisms}
\author{\'Swiatos\l aw R. Gal}
\address{Uniwersytet Wroc\l awski \& Uniwersit\"at Wien}
\email{sgal@math.uni.wroc.pl}
\author{Jarek K\k edra}
\address{University of Aberdeen \& Uniwersytet Szcze\-ci\'n\-ski}
\email{kedra@abdn.ac.uk}
\keywords{distortion in groups; rotation number;                                                                                                                         
groups of homeomorphisms; invariant measures}
\begin{document}

\begin{abstract}
Let $X$ be a path-connected topological space admitting a universal cover.
Let $\Homeo(X,\F a)$ denote the group of homeomorphisms of $X$
preserving degree one cohomology class $\F a$.

We investigate the distortion in $\Homeo(X,\F a)$.  Let $g$ be an element of
$\Homeo(X,\F a)$. We define a Nielsen-type equivalence relation on the space
of $g$-invariant Borel probability measures on $X$ and prove that if
a homeomorphism $g$ admits two non\-e\-qui\-valent invariant measures then it
is undistorted.
We also define a local rotation number of a homeomorphism generalising
the notion of the rotation of a homeomorphism of the circle. Then we prove
that a homeomorphism is undistorted if its rotation number is nonconstant.
\end{abstract}

\maketitle
%\setcounter{tocdepth}{1}
%\tableofcontents

%%%%%%%%%%%%%%%%%%%%%%%%%%%%%%%%%%%%%%%%%%%%%%%%%%%%%%%%%%%%%%%%
\section{Introduction and the statement of the results}
\label{S:intro}
%%%%%%%%%%%%%%%%%%%%%%%%%%%%%%%%%%%%%%%%%%%%%%%%%%%%%%%%%%%%%%%%

Let $X$ be a path-connected topological space admitting a universal
cover. Let $\Homeo(X,\F a)$ denote the group of homeomorphisms of $X$
preserving a cohomology class $\F a\in H^1(X;\B R)$.

In the present
paper, we study distortion in $\Homeo(X,\F a)$.
We define the distortion in groups and briefly discuss this notion
in Section~\ref{SS:distortion}. 

Let $g\in\Homeo(X,\F a)$.
By $\alpha$ we denote a singular one-cocycle representing the class $\F a$.
Let  $\F K_\alpha(g)\colon X\to\B R$ be a function such that
$$
\delta\F K_\alpha(g)=g^*\alpha-\alpha,
$$
where $\delta$ denotes the codifferential map $C^0(X;\B R)\to C^1(X,\B R)$
between singular zero-cocycles (functions) and one-cocycles on $X$.
Notice that $\F K_\alpha(g)$ exists since $g^*\alpha$ and $\alpha$ are in the
same cohomology class.
In other terms a function $\F K_\alpha(g)$ is defined by the condition that
for any $x,y\in X$ one has
$\F K_\alpha(g)(y)-\F K_\alpha(g)(x)=\int_\gamma\left(g^*\alpha-\alpha\right)$,
where $\gamma$ denotes any path between $x$ and $y$, and
the expression $\int_\gamma\sigma$
denotes the natural pairing of a chain $\gamma$
and a cochain $\sigma$. 
We find this nonstandard notation useful because we would like to think
that the cocycle $\alpha$ is defined by the integration of a 
differential form over smooth paths.

We discuss the function $\F K_\alpha(g)$ in Section \ref{SS:one-cocycle}
where we prove that it is continuous for a suitable choice of the cocycle $\alpha$.
In particular, the function $\F K_\alpha(g)$ is integrable with respect to any
Borel probability measure on $X$. 
If not stated otherwise, we assume that $\alpha$ is chosen in such a way.

Assume that the homeomorphism $g\in \Homeo(X,\F a)$ admits two invariant Borel
probability measures $\mu$ and $\nu$.
We say that $\mu$ and $\nu$ are {\bf $\F a$-Nielsen equivalent} (we will motivate
the name after the proof of Corollary \ref{C:distortion}) if
$$
\int\F K_\alpha(g)\mu= \int\F K_\alpha(g)\nu.
$$
It is clear that $\F K_\alpha(g)$ is defined up to an additive constant, but the difference
$\int\F K_\alpha(g)\mu-\int\F K_\alpha(g)\nu$ does not deppend on this choice.
If the class $\F a$ is fixed we will simply call measures Nielsen equivalent.

The following theorem is proven in Section~\ref{SS:proof1}.

\begin{theorem}\label{T:distortion}
Let $X$ be compact and let $g\in \Homeo(X,\F a)$ 
Assume that $g$ admits invariant Nielsen nonequivalent measures
then $g$ is undistorted in $\Homeo(X,\F a)$.
\end{theorem}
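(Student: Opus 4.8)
The plan is to show that $g$ is undistorted in every finitely generated subgroup $H\le\Homeo(X,\F a)$ that contains it, which is what undistortedness of $g$ in $\Homeo(X,\F a)$ means. Fix such an $H$ together with a finite symmetric generating set $S$, and write $|\cdot|_S$ for the word norm. The invariant that will detect linear growth of $|g^n|_S$ is the oscillation $\OP{osc}(f):=\max_X f-\min_X f$ of the continuous function $\F K_\alpha(g^n)$; this is finite because $X$ is compact and, crucially, it is insensitive to the additive constant in the definition of $\F K_\alpha$.

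The first step is to record the cochain identities for $\F K_\alpha$. From $(gh)^*\alpha=h^*g^*\alpha$ and $\delta(f\circ h)=h^*\delta f$ one computes $\delta\F K_\alpha(gh)=\delta\bigl(\F K_\alpha(g)\circ h+\F K_\alpha(h)\bigr)$, so $\F K_\alpha(gh)=\F K_\alpha(g)\circ h+\F K_\alpha(h)+\mathrm{const}$ since $X$ is path-connected. Iterating along a word $s_1\cdots s_\ell$ representing $h\in H$ and using that $\OP{osc}$ is subadditive, invariant under precomposition with homeomorphisms, and unchanged by adding constants, one gets $\OP{osc}\bigl(\F K_\alpha(h)\bigr)\le\sum_i\OP{osc}\bigl(\F K_\alpha(s_i)\bigr)\le C\,|h|_S$, where $C:=\max_{s\in S}\OP{osc}\bigl(\F K_\alpha(s)\bigr)<\infty$ because $S$ is finite and each $\F K_\alpha(s)$ is continuous on the compact space $X$. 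In particular $\OP{osc}\bigl(\F K_\alpha(g^n)\bigr)\le C\,|g^n|_S$.

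The matching lower bound is where the two invariant measures enter. The same telescoping argument (or a direct computation with $\delta$) gives $\F K_\alpha(g^n)=\sum_{k=0}^{n-1}\F K_\alpha(g)\circ g^k+c_n$ for a single constant $c_n$ that is independent of the measure one integrates against. Since $\mu$ and $\nu$ are $g$-invariant probability measures, the shifts disappear upon integration: $\int\F K_\alpha(g^n)\,\mu=n\int\F K_\alpha(g)\,\mu+c_n$, and likewise for $\nu$. Subtracting, the constant $c_n$ cancels and we obtain $\int\F K_\alpha(g^n)\,\mu-\int\F K_\alpha(g^n)\,\nu=n\Delta$, where $\Delta:=\int\F K_\alpha(g)\,\mu-\int\F K_\alpha(g)\,\nu\ne0$ by the Nielsen nonequivalence hypothesis. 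As $\mu$ and $\nu$ are probability measures, both of these integrals lie between $\min_X\F K_\alpha(g^n)$ and $\max_X\F K_\alpha(g^n)$, so $n\,|\Delta|\le\OP{osc}\bigl(\F K_\alpha(g^n)\bigr)$.

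Combining the two estimates yields
$$|g^n|_S\ \ge\ \frac{|\Delta|}{C}\,n\qquad\text{for all }n,$$
with $|\Delta|/C>0$, so $g$ is undistorted in $H$; since $H$ was an arbitrary finitely generated subgroup containing $g$, it is undistorted in $\Homeo(X,\F a)$. The only delicate point is the bookkeeping of the additive constants in $\F K_\alpha$, and the argument is organized precisely so that they are either swallowed by $\OP{osc}$ or, in the measure computation, are literally the same constant on both sides and cancel; I do not expect any further obstacle, the continuity of $\F K_\alpha(s)$ (established earlier for the chosen cocycle $\alpha$) being exactly what makes $C$ finite.
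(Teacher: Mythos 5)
Your argument is correct and is essentially the paper's own proof: your oscillation $\OP{osc}(\F K_\alpha(\cdot))$ is exactly the seminorm $\|\cdot\|_\alpha$ of Section \ref{SS:pseudonorm}, your bound $\OP{osc}(\F K_\alpha(h))\le C|h|_S$ is its Lipschitz property with respect to the word norm, and your telescoping/invariance computation is the paper's observation that $h\mapsto\int\F K_\alpha(h)(\mu-\nu)$ is a homomorphism on $\langle g\rangle$ bounded by the seminorm. No substantive difference, only more explicit bookkeeping of the additive constants.
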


The first corollary of the above theorem is when the measures are supported on
two fixed points of $g$.

\begin{corollary}\label{C:distortion}
Let $X$ be compact and let $g\in \Homeo(X,\F a)$ 
where $\F a\in H^1(X;\B R)$ is represented by a one-cocycle $\alpha$. 
Suppose that $g$ has two fixed points $x,y\in X$ and let $\gamma$ 
be a path from $x$ to $y$. If 
$$
\langle\F a,g\gamma-\gamma\rangle\neq 0
$$
then $g$ is undistorted in $\Homeo(X,\F a)$.
\end{corollary}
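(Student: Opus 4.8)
The plan is to apply Theorem~\ref{T:distortion} to the two Dirac measures $\delta_x$ and $\delta_y$ concentrated at the fixed points $x$ and $y$. Since $g(x)=x$ and $g(y)=y$, both $\delta_x$ and $\delta_y$ are $g$-invariant Borel probability measures on the compact space $X$, so the hypotheses of Theorem~\ref{T:distortion} will be met as soon as we verify that these two measures are Nielsen nonequivalent. (Integrability of $\F K_\alpha(g)$ against a Dirac mass is automatic, so we do not even need the continuity of $\F K_\alpha(g)$ established earlier.)

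The verification is an immediate unwinding of the definition. We have $\int\F K_\alpha(g)\,\delta_x=\F K_\alpha(g)(x)$ and $\int\F K_\alpha(g)\,\delta_y=\F K_\alpha(g)(y)$, and by the defining property of $\F K_\alpha(g)$, applied to the path $\gamma$ from $x$ to $y$,
$$
\F K_\alpha(g)(y)-\F K_\alpha(g)(x)=\int_\gamma\left(g^*\alpha-\alpha\right)=\langle\alpha,g\gamma\rangle-\langle\alpha,\gamma\rangle=\langle\alpha,g\gamma-\gamma\rangle=\langle\F a,g\gamma-\gamma\rangle,
$$
where in the last step I use that $g\gamma-\gamma$ is a $1$-cycle — both $\gamma$ and $g\gamma$ run from $x$ to $y$ because $x$ and $y$ are fixed by $g$ — so the pairing factors through the cohomology class $\F a$. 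The hypothesis $\langle\F a,g\gamma-\gamma\rangle\neq0$ thus gives $\int\F K_\alpha(g)\,\delta_x\neq\int\F K_\alpha(g)\,\delta_y$, i.e. $\delta_x$ and $\delta_y$ are Nielsen nonequivalent, and Theorem~\ref{T:distortion} then immediately yields that $g$ is undistorted in $\Homeo(X,\F a)$.

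The one point worth a sentence rather than a computation is that $\langle\F a,g\gamma-\gamma\rangle$ is well defined, i.e. independent of the chosen path $\gamma$: replacing $\gamma$ by another path $\gamma'$ from $x$ to $y$ alters $g\gamma-\gamma$ by $g(\gamma'-\gamma)-(\gamma'-\gamma)$, and since $\gamma'-\gamma$ is a cycle and $g$ preserves $\F a$, this pairs trivially with $\F a$; this is of course consistent with the left-hand side $\F K_\alpha(g)(y)-\F K_\alpha(g)(x)$ being manifestly path-independent. I do not anticipate any genuine obstacle: the corollary is just the specialisation of Theorem~\ref{T:distortion} to invariant measures supported on fixed points.
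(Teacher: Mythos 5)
Your proof is correct and is essentially identical to the paper's: both apply Theorem~\ref{T:distortion} to the Dirac measures $\delta_x$, $\delta_y$ and unwind the definition of $\F K_\alpha(g)$ along the path $\gamma$ to show they are Nielsen nonequivalent. Your added remarks on path-independence and on why the pairing factors through the class $\F a$ are sound but not needed beyond what the paper records.
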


\begin{proof}
We have to check that atomic measures $\delta_x$ and $\delta_y$ supported on $x$ and $y$
are not Nielsen equivalent.
By the definition of $\F K_\alpha(g)$ we have that
$$
\int\F K_\alpha(g)(\delta_y-\delta_x)=\F K_\alpha(g)(y)-\F K_\alpha(g)(x)=\int_\gamma g^*\alpha-\alpha=\langle\F a,g\gamma-\gamma\rangle\neq0.
$$
\end{proof}

Two fixed points $x,y$ of a map $g\colon X\to X$
are called {\bf Nielsen equivalent} if there exists a path
$\gamma$ from $x$ to $y$ such that $\gamma$ and $g\gamma$
are homotopic modulo the endpoints. The hypothesis
of the above theorem implies that the homeomorphism
$g$ has two fixed points which are Nielsen nonequivalent
in a stronger sense. Namely, the cycle $g\gamma - \gamma$
is homologically nontrivial.

\begin{example}\label{E:surface_nielsen}
Let $G\subset \Homeo(\Sigma)$ be a group of homeomorphisms
of a closed oriented surface $\Sigma$ acting trivially
on the first cohomology of~$\Sigma$. Suppose that  $g\in G$ has
two fixed points $x,y\in \Sigma$ such that
$g\gamma - \gamma$ is a homologically nontrivial loop,
where $\gamma$ is a path from $x$ to $y$. 
% (notice that such elements exists).
Then $g$ is undistorted in $G$. Indeed, there exists a
cohomology class $\F a\in H^1(\Sigma;\B Z)$ evaluating
nontrivially on $g\gamma - \gamma$.
\hfill $\diamondsuit$
\end{example}

Another instance where Theorem \ref{T:distortion} applies is the following.

\begin{corollary}\label{E:annulus}
Let $X=\B S^1\times [0,1]$ be the closed annulus and let
$h\colon X\to X$ be a homeomorphism preserving the orientation
and the components of the boundary. If the topological rotation numbers
of $h$ restricted to the boundary circles are distinct then
invariant measures supported on boundary circles are Nielsen nonequivalent.
Thus, by Theorem \ref{T:distortion}, $h$ is undistorted in the group of
orientation preserving homeomorphisms of\/ $X$. 
%\hfill
%$\diamondsuit$
\end{corollary}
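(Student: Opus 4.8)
The plan is to produce an explicit $h$-invariant class $\F a$, to identify the restriction of $\F K_\alpha(h)$ to each boundary circle with the displacement function of the corresponding circle homeomorphism (up to a coboundary), and then to recognise the $\mu$-average of that displacement as a lift of the rotation number. Comparing the two boundary circles and invoking Theorem~\ref{T:distortion} will then finish the argument.

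First I would take $\F a\in H^1(X;\B R)$ to be a generator, say the class dual to an arc $\{*\}\times[0,1]$, represented by a continuous cocycle $\alpha$ as in Section~\ref{SS:one-cocycle}. Since $X\simeq\B S^1$ we have $H^1(X;\B R)=\B R\F a$, and because $h$ preserves the orientation of $X$ and does not interchange the two boundary circles, its restriction to each boundary circle is orientation preserving, so $h$ acts as the identity on $H^1(X;\B Z)$ and $h\in\Homeo(X,\F a)$. Next I would pass to the universal cover $p\colon\widetilde X=\B R\times[0,1]\to X$, with deck group generated by $T(s,t)=(s+1,t)$; normalising $\alpha$ so that its period around $\B S^1\times\{*\}$ is $1$, I would fix a function $F\colon\widetilde X\to\B R$ with $\delta F=p^*\alpha$ and $F\circ T=F+1$, and a lift $\widetilde h$ of $h$. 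A routine check then shows that $x\mapsto F(\widetilde h\widetilde x)-F(\widetilde x)$ (for any lift $\widetilde x$ of $x$) is well defined and is a legitimate choice for $\F K_\alpha(h)$.

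Then I would restrict to a boundary circle $C_i=\B S^1\times\{i\}$ and identify its preimage $\B R\times\{i\}$ with $\B R$ via the first coordinate. There $\widetilde h$ induces a lift $\widetilde h_i\colon\B R\to\B R$ of $h|_{C_i}$ commuting with $s\mapsto s+1$, and $F$ induces $F_i\colon\B R\to\B R$ with $F_i(s+1)=F_i(s)+1$. Writing $G_i(s)=F_i(s)-s$, which is $1$-periodic and hence descends to $C_i$, one obtains
$$
\F K_\alpha(h)\big|_{C_i}(x)=\bigl(\widetilde h_i(s)-s\bigr)+\bigl(G_i(\widetilde h_i(s))-G_i(s)\bigr).
$$
Let $\mu_0,\mu_1$ be $h$-invariant probability measures supported on $C_0$ and $C_1$ (equivalently, $h|_{C_i}$-invariant measures on $C_i$, which exist by Krylov--Bogolyubov). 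Since $G_i$ descends to $C_i$ and $\mu_i$ is $h|_{C_i}$-invariant, the second summand integrates to $0$, so $\int\F K_\alpha(h)\,\mu_i=\int_{C_i}\bigl(\widetilde h_i(s)-s\bigr)d\mu_i$. The integrand is $1$-periodic, and the standard uniform convergence $\tfrac1n\bigl(\widetilde h_i^{\,n}(s)-s\bigr)\to\rho$ (or Birkhoff's ergodic theorem) identifies this integral with the lift $\widetilde\rho(h|_{C_i})$ of the rotation number of $h|_{C_i}$ determined by $\widetilde h$.

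Finally, replacing $\widetilde h$ by $T^k\widetilde h$ shifts $\widetilde\rho(h|_{C_0})$ and $\widetilde\rho(h|_{C_1})$ by the same integer, so
$$
\int\F K_\alpha(h)\,\mu_0-\int\F K_\alpha(h)\,\mu_1=\widetilde\rho(h|_{C_0})-\widetilde\rho(h|_{C_1})
$$
is a well-defined real number, a lift of $\rho(h|_{C_0})-\rho(h|_{C_1})\in\B R/\B Z$. If the two rotation numbers are distinct this quantity is nonzero, hence $\mu_0$ and $\mu_1$ are Nielsen nonequivalent and Theorem~\ref{T:distortion} applies. I expect the middle step to be the only real point: namely, verifying that restricting $\F K_\alpha(h)$ to a boundary circle recovers, up to the coboundary of the $1$-periodic function $G_i$, exactly the displacement cocycle of $h|_{C_i}$ whose average is the rotation number; the rest is bookkeeping with lifts and deck transformations.
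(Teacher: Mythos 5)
Your argument is correct. The paper gets this corollary by specialising Proposition~\ref{P:scc}: with $\F a$ the generator of $H^1(X;\B Z)$ one has $\langle\F a,\ell_1\rangle=\langle\F a,\ell_2\rangle=1$ for the two boundary circles, so distinct rotation numbers give $\rho_1\langle\F a,\ell_1\rangle\neq\rho_2\langle\F a,\ell_2\rangle$, and the proposition (proved there via the path-concatenation identity $\F K_\alpha(g)(y)-\F K_\alpha(g)(x)=\int_{\eta_y}\alpha-\int_{\eta_x}\alpha$ and averaging over the invariant measures) yields Nielsen nonequivalence. What you do is reprove exactly that computation in the annulus case by explicit covering-space bookkeeping: realising $\F K_\alpha(h)$ as $F\circ\tilde h-F$, splitting its restriction to a boundary circle into the displacement $\tilde h_i(s)-s$ plus the coboundary of the periodic function $G_i$, and using the standard fact that the average of the displacement against any invariant measure is the translation number of the chosen lift. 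In substance the two routes coincide --- both identify $\int\F K_\alpha(h)(\mu_0-\mu_1)$ with a lift of $\rho_0-\rho_1$, which is nonzero when the rotation numbers differ, so Theorem~\ref{T:distortion} applies --- but the paper's version is stated for arbitrary invariant simple closed curves and avoids covers, while yours is self-contained for the annulus and makes the mod-$\B Z$ bookkeeping (independence of the choice of the lift $\tilde h$, and of the paths from $x$ to $hx$) more explicit than the brief averaging claim in the proof of Proposition~\ref{P:scc}.
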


This is a corollary of a more general statement (proven in Section~\ref{SS:scc}).

\begin{proposition}\label{P:scc}
Assume that $\F a\in H^1(X,\B Z)$.
Let $\ell_1$  and $\ell_2$ be simple closed curves invariant by a homeomorphism
$g\in\Homeo(X,\F a)$.  Let $\rho_i\in\B R/\B Z$ denote the topological rotation number
of $g$ on $\ell_i$.  If $\rho_1\langle\F a,\ell_1\rangle \neq \rho_2\langle\F a,\ell_2\rangle$
then (any) $g$-invariant measures supported on $\ell_1$ and $\ell_2$ are Nielsen nonequivalent.
\end{proposition}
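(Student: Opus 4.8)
The plan is to evaluate $\int\F K_\alpha(g)\,\mu_i$ for a $g$-invariant probability measure $\mu_i$ supported on $\ell_i$, to show that it equals $\rho_i\langle\F a,\ell_i\rangle$ modulo $\B Z$, and then to read off Nielsen nonequivalence from the hypothesis. The integrality of $\F a$ is used precisely to control the ``integer part'' of this evaluation, which is why the hypothesis is stated in $\B R/\B Z$ rather than in $\B R$.

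Fix $i$, write $n_i:=\langle\F a,\ell_i\rangle\in\B Z$, and let $g_i:=g|_{\ell_i}$, an orientation-preserving homeomorphism of the circle $\ell_i$ (orientation preservation is implicit in the existence of $\rho_i$). Choose a lift $\tilde g_i\colon\B R\to\B R$ of $g_i$ and put $\varphi_i:=\tilde g_i-\mathrm{id}$, a continuous $1$-periodic function, hence a continuous function on $\ell_i\cong\B R/\B Z$. The first and main step is the identity
\[
\F K_\alpha(g)|_{\ell_i}\;=\;c_i\;+\;n_i\,\varphi_i\;+\;\bigl(h_i\circ g_i-h_i\bigr),
\]
where $h_i\colon\ell_i\to\B R$ is continuous and $c_i\in\B Z$. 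To establish it I would pass to the universal cover $p\colon\tilde X\to X$: since $\tilde X$ is simply connected, the cocycle $p^*\alpha$ is a coboundary, so it admits a primitive $F\colon\tilde X\to\B R$, which satisfies $F(\eta q)=F(q)+a(\eta)$, where $a\colon\pi_1(X)\to\B Z$ is the homomorphism representing $\F a$ --- this is where integrality enters. One checks that $\F K_\alpha(g)(x)=F(\tilde g\tilde x)-F(\tilde x)$ for any lift $\tilde x$ of $x$ and a fixed lift $\tilde g$ of $g$ (well-definedness uses $g^*\F a=\F a$). Parametrising $\ell_i$ by $\B R/\B Z$, choose a lift $\tilde\ell_i\colon\B R\to\tilde X$ of the corresponding map $\B R\to X$; then $\tilde\ell_i(t+1)=\lambda_i\tilde\ell_i(t)$ for the deck transformation $\lambda_i$ with $a(\lambda_i)=n_i$, and $\tilde g\circ\tilde\ell_i=\zeta_i\cdot(\tilde\ell_i\circ\tilde g_i)$ for some deck transformation $\zeta_i$. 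Substituting into $\F K_\alpha(g)(x)=F(\tilde g\tilde\ell_i(t))-F(\tilde\ell_i(t))$ and using that $F\circ\tilde\ell_i-n_i\,\mathrm{id}$ is $1$-periodic gives the identity, with $c_i=a(\zeta_i)\in\B Z$ and $h_i$ the function on $\ell_i$ induced by $F\circ\tilde\ell_i-n_i\,\mathrm{id}$ (continuous by the choice of $\alpha$ made in Section~\ref{SS:one-cocycle}).

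The rest is routine. As $\mu_i$ is $g_i$-invariant and $h_i$ is bounded, $\int(h_i\circ g_i-h_i)\,\mu_i=0$, so $\int\F K_\alpha(g)\,\mu_i=c_i+n_i\int\varphi_i\,\mu_i$. Telescoping, $\sum_{k=0}^{N-1}\varphi_i(g_i^kx)=\tilde g_i^N(\tilde x)-\tilde x$, and $\tfrac1N(\tilde g_i^N(\tilde x)-\tilde x)$ converges uniformly to the translation number $\tau(\tilde g_i)$; integrating against $\mu_i$ and letting $N\to\infty$ gives $\int\varphi_i\,\mu_i=\tau(\tilde g_i)$, while $\tau(\tilde g_i)\equiv\rho_i\pmod{\B Z}$ by the definition of the topological rotation number. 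Hence $\int\F K_\alpha(g)\,\mu_i\equiv n_i\rho_i=\rho_i\langle\F a,\ell_i\rangle\pmod{\B Z}$, and therefore
\[
\int\F K_\alpha(g)\,\mu_1-\int\F K_\alpha(g)\,\mu_2\;\equiv\;\rho_1\langle\F a,\ell_1\rangle-\rho_2\langle\F a,\ell_2\rangle\pmod{\B Z}.
\]
By hypothesis the right-hand side is nonzero in $\B R/\B Z$, so the two integrals are distinct, i.e.\ $\mu_1$ and $\mu_2$ are Nielsen nonequivalent.

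I expect the first step, and within it the bookkeeping that keeps $c_i$ an integer, to be the real content: restricted to $\ell_i$ alone, $c_i$ is merely a real constant, and it is the comparison carried out inside $X$ --- through the single primitive $F$ on $\tilde X$, whose equivariance is $\B Z$-valued exactly because $\F a$ is integral --- that forces $c_1-c_2\in\B Z$. The one technical point to be careful about is that $F$, and hence $h_i$, should be taken continuous so that the integrals are defined; this is precisely what the choice of cocycle $\alpha$ in Section~\ref{SS:one-cocycle} provides.
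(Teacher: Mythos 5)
Your proof is correct and follows the same strategy as the paper's: both arguments show that $\int\F K_\alpha(g)\,\mu_i \equiv \rho_i\langle\F a,\ell_i\rangle \pmod{\B Z}$ and then read off nonequivalence from the hypothesis. The paper phrases this via a continuous choice of paths from $x$ to $gx$ and the concatenation identity for $\F K_\alpha$, while you lift to the universal cover and use an equivariant primitive of $p^*\alpha$; this is only a difference of packaging, though your version does spell out the integer bookkeeping that the paper dismisses as ``clear''.
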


Observe that the expression from defining $\F K_\alpha$ leads to a
definition of a two-cocycle on the group $\Homeo(X,\F a)$ with trivial coefficients.
Let $g,h\in \Homeo(X,\F a)$ and let $\gamma\colon [0,1]\to X$
be a continuous path from a reference point $x\in X$ to its
image $hx$.  Define
$$
\F G_{x,\alpha}(g,h) := \int_\gamma g^*\alpha-\alpha,
$$
where $\alpha $ is a singular one-cocycle representing the class $\F a$.

In Section \ref{SS:bounded} we consider the case when $\F a\in H^1(X,\B Z)$ and
$\alpha$ is an integer valued one-cocycle representing $\F a$.
In general, the two-cocycle $\F G_{x,\alpha}$ is not a bounded cocycle, but
in Section \ref{SS:bounded} we define
a local rotation number of a homeomorphism $g$ with respect to a point $x$
as $[\F G_{x,\alpha}]\in H^2_{\OP{b}}(\B Z;\B Z)=\B R/\B Z$
provided the cocycle $\F G_{x,\alpha}$ is
a bounded two-cocycle on the cyclic group generated by $g$. 

If $X=\B S^1$ then the two-cocycle $\F G_{x,\alpha}$ corresponding to
the length form is the (well sudied) Euler
cocycle. In particular, the local rotation number 
equals the classical topological rotation number of a
orientation preserving homeomorphism of the circle 
\cite[Section 6.3]{MR1876932}.  More precisely, if $\ell$ is an invariant
circle in $X$ then the local rotation number of a point in $\ell$ equals to
the topological rotation number of the action on $\ell$ times
$\langle\F a,\ell\rangle$.

The points where $\F G_{x,\alpha}$ is a bounded cocycle can be used to detect
nondistortion instead of invariant measures as
the following result shows (which is a consequence of a more general result,
Theorem \ref{T:q-m}, proven in Section~\ref{SS:q-m}).

\begin{theorem}\label{T:rotation}
Let $\F a\in H^1(X;\B Z)$ and let $g\in \Homeo(X,\F a)$
and assume that $X$ is compact.
Let $x$ and $y$ be points such that the cocycles
$\F G_{x,\alpha}$ and $\F G_{y,\alpha}$ are bounded on the cyclic subgroup
generated by $g$. If the local rotation numbers of $g$
at $x$ and $y$ are distinct then $g$ is undistorted
in $\Homeo(X,\F a)$.
\end{theorem}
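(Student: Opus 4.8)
The idea is to reduce to the mechanism of Theorem \ref{T:distortion}: produce, from the two bounded points $x$ and $y$, a family of invariant measures whose $\F K_\alpha$-averages are forced to differ because the local rotation numbers (which are $\B R/\B Z$-valued classes of $\F G_{x,\alpha}$ and $\F G_{y,\alpha}$) are different. Concretely, I would first recall the relation between $\F G_{x,\alpha}$ and $\F K_\alpha$: for $h=g$ one has $\F G_{x,\alpha}(g^n,g)=\int_{\gamma_n}g^*\alpha-\alpha$ where $\gamma_n$ runs from $g^nx$ to $g^{n+1}x$, and telescoping gives that the partial sums of $\F K_\alpha(g)$ along the orbit of $x$ satisfy $\sum_{k=0}^{n-1}\F K_\alpha(g)(g^kx)=\F G_{x,\alpha}(g^n,g)+$ (a coboundary term), i.e. they coincide up to bounded error with the Birkhoff sums of $\F K_\alpha(g)$. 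The hypothesis that $\F G_{x,\alpha}$ is bounded on $\langle g\rangle$ says precisely that $\frac1n\sum_{k=0}^{n-1}\F K_\alpha(g)(g^kx)$ converges, and its limit is (a representative of) the local rotation number at $x$. So the local rotation number at $x$ equals the Birkhoff average of $\F K_\alpha(g)$ at $x$, read in $\B R/\B Z$.

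Next I would invoke the standard fact that Birkhoff averages along an orbit are realized by invariant measures: passing to a weak-$*$ limit point $\mu_x$ of the empirical averages $\frac1n\sum_{k=0}^{n-1}\delta_{g^kx}$ yields a $g$-invariant Borel probability measure with $\int \F K_\alpha(g)\,\mu_x$ equal to the Birkhoff average at $x$ — here compactness of $X$ and continuity (hence boundedness) of $\F K_\alpha(g)$, guaranteed for the good choice of $\alpha$ discussed in Section \ref{SS:one-cocycle}, are what make the limit exist and the integral pass to the limit. Build $\mu_y$ the same way from $y$. Since the local rotation numbers at $x$ and $y$ differ in $\B R/\B Z$, the real numbers $\int\F K_\alpha(g)\,\mu_x$ and $\int\F K_\alpha(g)\,\mu_y$ differ — in fact they differ by a non-integer — so in particular they are not equal, i.e. $\mu_x$ and $\mu_y$ are Nielsen nonequivalent. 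Theorem \ref{T:distortion} then immediately gives that $g$ is undistorted in $\Homeo(X,\F a)$.

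The main obstacle is the bookkeeping in the first step: making precise that the bounded cohomology class $[\F G_{x,\alpha}]\in H^2_{\mathrm b}(\B Z;\B Z)=\B R/\B Z$ is computed by the limit of normalized Birkhoff sums of $\F K_\alpha(g)$, i.e. identifying the "rotation number" defined cohomologically in Section \ref{SS:bounded} with the dynamical Birkhoff average, and controlling the error terms (the coboundary relating $\F G_{x,\alpha}$ and the telescoped $\F K_\alpha$-sums) uniformly in $n$. Once that identification is in hand, everything else is the Krylov–Bogolyubov construction plus a citation of Theorem \ref{T:distortion}; the only mild subtlety there is that different weak-$*$ limit points could a priori give different averages, but any single choice of $\mu_x$ and $\mu_y$ suffices for the conclusion, so this causes no trouble. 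I would also remark that this argument is in fact how Theorem \ref{T:rotation} follows from the more general Theorem \ref{T:q-m}, replacing the ad hoc Birkhoff-average discussion by the general quasimorphism statement there.
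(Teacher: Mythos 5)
Your route is viable and genuinely different from the paper's. The paper does not pass through invariant measures at all: it feeds the quasimorphism $\F q(g^n)=\F K_\alpha(g^n)(y)-\F K_\alpha(g^n)(x)$ directly into Theorem \ref{T:q-m}, computing $\F q(g^n)=n\bigl(\int_{\square_1}\alpha+(r_x(g)-r_y(g))\bigr)+O(1)$ by means of the loop $\square_1$ built from $\gamma$, $\eta_{x,1}$, $g\gamma$ and $\eta_{y,1}$; since $\int_{\square_1}\alpha\in\B Z$ while $r_x(g)-r_y(g)\notin\B Z$, the quasimorphism $\F q$ is unbounded on $\langle g\rangle$. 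Your version inserts a Krylov--Bogolyubov step to convert the same asymptotics into a pair of Nielsen-nonequivalent invariant measures and then quotes Theorem \ref{T:distortion}. Since $\F q(g^n)/n$ is exactly the difference of the $n$-step Birkhoff averages of $\F K_\alpha(g)$ at $y$ and at $x$, the two arguments establish the same estimate; yours adds one extra (correct, but unnecessary) layer. What your version buys is a genuine statement about invariant measures; what the paper's buys is brevity and the more general Theorem \ref{T:q-m}, which needs no measures and no metrizability or limit-point discussion.

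Two points in your bookkeeping need repair. First, the telescoping identity is not $\sum_{k=0}^{n-1}\F K_\alpha(g)(g^kx)=\F G_{x,\alpha}(g^n,g)+(\text{coboundary})$; the correct statements are $\sum_{k=0}^{n-1}\F K_\alpha(g)(g^kx)=\F K_\alpha(g^n)(x)$ (the cocycle identity for $\F K_\alpha$) together with $\F K_\alpha(g^{n+m})(x)-\F K_\alpha(g^{n})(x)-\F K_\alpha(g^{m})(x)=\F G_{x,\alpha}(g^n,g^m)$, which is what makes $n\mapsto\F K_\alpha(g^n)(x)$ a quasimorphism on $\B Z$ and forces the Birkhoff averages to converge. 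Second, and more seriously, that limit does \emph{not} represent $\OP{\bf rot}_{x,\alpha}(g)$ in $\B R/\B Z$: comparing the two primitives $n\mapsto-\F K_\alpha(g^n)(x)$ and $\F b_x$ of $\F G_{x,\alpha}$ on $\langle g\rangle$ shows that the Birkhoff average equals $\F K_\alpha(g)(x)-\int_{\eta_{x,1}}\alpha-r_x(g)$, and the correction term $\F K_\alpha(g)(x)-\int_{\eta_{x,1}}\alpha$ is in general neither an integer nor even well defined (it depends on the chosen representative of $\F K_\alpha(g)$ modulo constants). Your argument survives because only the \emph{difference} of the two averages enters: the corrections at $x$ and at $y$ combine into $\int_{\square_1}\alpha\in\B Z$, so the difference is congruent to $r_x(g)-r_y(g)$ modulo $\B Z$, hence is a non-integer and in particular nonzero. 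You correctly flagged this identification as the main obstacle; the computation above is the missing piece, and with it the reduction to Theorem \ref{T:distortion} goes through.
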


Notice that Theorem \ref{T:rotation} implies, in particular, Proposition \ref{P:scc}.

\begin{example}
Let $X$ be a closed oriented manifold with non-zero Euler 
characteristic and with positive first Betti number
(e.g. a surface of genus at least two).
Let $F\colon \widetilde{X} \to \B R$ be a function such that
$\OP{d}F = p^*\alpha$, where $p\colon \widetilde{X} \to X$ is
the universal cover and $\alpha$ is a closed one-form
on $X$ with integral periods representing a nonzero
cohomology class $\F a\in H^1(X;\B Z)$.

Let $g\in \Homeo(X,\F a)$ be a homeomorphism and let
$\tilde g\in \Homeo(\widetilde{X})$  denote its lift.
If a point $\tilde x \in \widetilde{X}$ is such that the 
following limit
$$
\lim_{n\to \infty}\frac{F(\tilde g^n(\tilde x))}{n}
$$
exists and is not an integer then $g$ is undistorted
in $\Homeo(X,\F a)$. Indeed, since the Euler characteristic
of $X$ is nonzero the homeomorphism $g$ has a fixed point
$y\in X$. The local rotation number of a fixed point is
equal to zero. On the other hand the local rotation number
of $x:=p(\tilde x)$ is equal modulo integers and up to a sign
to the above limit (see Proposition \ref{P:rotation})
and it follows from the above assumption that
it is nonzero.
\hfill
$\diamondsuit$
\end{example}

%%%%%%%%%%%%%%%%%%%%%%%%%%%%%%%%%%%%%%%%%%%%%%%%%%%%%%%%%%%%%%%%
\subsection{Distortion in groups}\label{SS:distortion}
%%%%%%%%%%%%%%%%%%%%%%%%%%%%%%%%%%%%%%%%%%%%%%%%%%%%%%%%%%%%%%%%
Let $\Gamma$ be a finitely generated group.  Define the word norm
associated with fixed set of generators $S$ to be
$$
|g|:=\min\{k\in \B N\,|\,g=s_1\ldots s_k,\,s_i\in S\}.
$$
The {\bf translation length} 
of an element $g\in \Gamma$ is defined to be
%\cite[Definition 3.32]{MR2527432} 
$$
\tau(g):=\lim_{n\to \infty}\frac{|g^n|}{n}.
$$
An element $g\in \Gamma$ is called {\bf undistorted} if
its translation length is positive and this property
does not depend on the choice of generators. 
If $G$ is a general (not necessarily finitely generated)
group then $g\in G$ is called {\bf undistorted} if it
is undistorted in every finitely generated subgroup
of $G$. Notice that distortion in a subgroup implies 
distortion in the ambient group.

The distortion is a tool in understanding group actions
on manifolds. For example, it is well known that
certain lattices in semisimple Lie groups contain distorted
elements due to a result of Lubotzky-Mozes and Raghunathan
\cite{MR1828742}. On the other hand, the distortion in
groups of diffeomorphisms of closed manifolds is rare
as shown, for example,  by Franks and Handel \cite{MR2219247}, 
Gambaudo and Ghys \cite{MR2104597}, or Polterovich \cite{MR2003i:53126}. 
This provides restrictions on possible actions of such lattices.

The papers cited above are concerned with the distortion either
in volume preserving or in Hamiltonian diffeomorphisms.
It follows from our results, however, that many elements are undistorted
in groups of homeomorphisms of manifolds of dimension at least two
and with nontrivial first real cohomology. Essentially, this is as much as
one gets for such manifolds. In contrast, Calegari and Freedman 
\cite[Theorem C]{MR2207794} proved that all homeomorphisms of the 
sphere $\B S^n$ are distorted in $\Homeo(\B S^n)$.

%%%%%%%%%%%%%%%%%%%%%%%%%%%%%%%%%%%%%%%%%%%%%%%%%%%%%%%%%%%%%%%%
\subsection*{Historical remarks}
%%%%%%%%%%%%%%%%%%%%%%%%%%%%%%%%%%%%%%%%%%%%%%%%%%%%%%%%%%%%%%%%
The cocycle $\F G_{x,\alpha}$ can be defined  for
an arbitrary, not necessarily closed, one-cochain $\alpha$
on a suitably defined subgroup of the group $\Homeo(X)$. It has been first
defined by Is\-ma\-gi\-lov, Losik, and Michor in~\cite{MR2270616} for a
primitive of a symplectic form and further studied by the
authors in \cite{GK2}.

The cocycle $\F K_{\alpha}$ (see Section \ref{SS:one-cocycle} for definition)
appears in Gambaudo and Ghys \cite{MR1452855} and in Arnold and Khesin
\cite[p.~247]{MR1612569} in the case of a symplectic ball.
It has been studied for a general symplectically aspherical
manifold in~\cite{MR2770429}.

The local rotation number generalizes the rotation number 
of a homeomorphism of a circle.  There are related notions 
in the literature.  For example
the rotation vector of a surface diffeomorphism defined by Franks
in \cite[Definition 2.1]{MR1325916}, or
the rotation defined by Burger, Iozzi, and Wienhard
in \cite[Definition 7.1]{MR2680425}.

%%%%%%%%%%%%%%%%%%%%%%%%%%%%%%%%%%%%%%%%%%%%%%%%%%%%%%%%%%%%%%%%
\section{Proofs of main results}\label{S:preliminaries}
%%%%%%%%%%%%%%%%%%%%%%%%%%%%%%%%%%%%%%%%%%%%%%%%%%%%%%%%%%%%%%%%
\subsection{The one-cocycle $\F K_{\alpha}$}\label{SS:one-cocycle}
%%%%%%%%%%%%%%%%%%%%%%%%%%%%%%%%%%%%%%%%%%%%%%%%%%%%%%%%%%%%%%%%
If $g\in G\subset\Homeo(X,\F a)$ then $g^*\alpha - \alpha$
is an exact singular one-cocycle on $X$ and the identity
$\delta(\F K_{\alpha}(g))=g^*\alpha - \alpha$
defines a map
$$
\F K_{\alpha}\colon G\to C^0(X;\B R)/\B R.
$$
It is straightforward to check that $\F K_{\alpha}$ is a one-cocycle
(cf. \cite[Proposition 2.3]{MR2770429}). That is, it satisfies
$$
\F K_{\alpha}(gh)=\F K_{\alpha}(g)\circ h + \F K_{\alpha}(h)
$$
for all $g,h\in G$. 

\begin{lemma}\label{L:continuous}
Assume that $X$ is paracompact.
Let $\F a\in H^1(X;\B R)$. There exists a singular cocycle $\alpha$ representing
the class $\F a$ such that for any homeomorphism $g\in \Homeo(X,\F a)$
the function $\F K_\alpha(g)$ is a continuous function.
\end{lemma}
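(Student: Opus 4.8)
The plan is to produce a representative cocycle $\alpha$ that is ``locally constant'' in a controlled sense, so that the primitive $\F K_\alpha(g)$ inherits continuity. First I would fix a good open cover $\{U_i\}$ of $X$ by contractible (hence simply connected) open sets; paracompactness guarantees such a cover exists and admits a subordinate partition of unity. On each $U_i$ the class $\F a|_{U_i}$ vanishes, so I would choose continuous functions $f_i\colon U_i\to\B R$ realizing a trivialization, and patch them via the partition of unity to build a representative cocycle $\alpha$ whose evaluation on a singular $1$-simplex $\sigma\colon[0,1]\to X$ depends only on mild data (essentially, once $\sigma$ lies inside a single $U_i$, $\int_\sigma\alpha = f_i(\sigma(1)) - f_i(\sigma(0))$). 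Concretely, the standard device is to take $\alpha$ to be the \v{C}ech--de~Rham type cocycle, or equivalently to use the bar-type formula $\alpha(\sigma) = \sum_i \big(\lambda_i(\sigma(1)) f_i(\sigma(1)) - \lambda_i(\sigma(0)) f_i(\sigma(0))\big) + (\text{correction on overlaps})$, arranged so that $\alpha$ restricted to small simplices is a coboundary of an explicit continuous function. The key point to extract is: there is a continuous function $\phi\colon X\to\B R$ (built from the $f_i$ and $\lambda_i$) and an open cover such that for any $1$-simplex $\sigma$ contained in a cover element, $\int_\sigma\alpha = \phi(\sigma(1)) - \phi(\sigma(0))$.

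Given such an $\alpha$, I would then analyze $\F K_\alpha(g)$ directly from its defining property $\F K_\alpha(g)(y) - \F K_\alpha(g)(x) = \int_\gamma(g^*\alpha - \alpha)$. Fix a basepoint $x_0$ and normalize $\F K_\alpha(g)(x_0) = 0$. To check continuity at a point $y$, take any point $y'$ near $y$, joined to $y$ by a short path $\eta$ lying inside a single cover element $U_i$ whose image under $g$ also lies inside a single cover element $U_j$ (possible by continuity of $g$ and by refining the cover). Then $\F K_\alpha(g)(y') - \F K_\alpha(g)(y) = \int_\eta(g^*\alpha - \alpha) = \int_{g\eta}\alpha - \int_\eta\alpha$. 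By the key point above, $\int_\eta\alpha = \phi(y') - \phi(y)$ and $\int_{g\eta}\alpha = \phi(g y') - \phi(g y)$, so
$$
\F K_\alpha(g)(y') - \F K_\alpha(g)(y) = \big(\phi(g y') - \phi(g y)\big) - \big(\phi(y') - \phi(y)\big),
$$
which tends to $0$ as $y'\to y$ because $\phi$ and $g$ are continuous. Hence $\F K_\alpha(g)$ is continuous everywhere.

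The main obstacle I anticipate is the careful construction of the cocycle $\alpha$ together with its ``local primitive'' $\phi$: one must ensure that $\alpha$ is a genuine singular cocycle representing $\F a$ (not merely locally defined), that the formula $\int_\sigma\alpha = \phi(\sigma(1))-\phi(\sigma(0))$ holds on \emph{all} sufficiently small simplices rather than just those in a fixed $U_i$, and that this is compatible across overlaps $U_i\cap U_j$. A clean way to sidestep the overlap bookkeeping is to pass to the universal cover $p\colon\widetilde X\to X$: there $p^*\F a = 0$, so $p^*\alpha = \delta F$ for a function $F\colon\widetilde X\to\B R$ which one can choose continuous (since $\widetilde X$ is simply connected and, with a good cover argument, the primitive glues continuously); then set $\phi$ to be the ``derivative of $F$ along the deck group'' data, and push the computation down. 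Either route is routine once the good-cover input from paracompactness is in hand; the remaining verifications are standard diagram-chasing with partitions of unity, which I would not spell out in detail.
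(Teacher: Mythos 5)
Your central ``key point'' is false as stated, and it is exactly where the content of the lemma lives. You ask for a single continuous $\phi\colon X\to\B R$ and a cover such that $\int_\sigma\alpha=\phi(\sigma(1))-\phi(\sigma(0))$ for every $1$-simplex $\sigma$ contained in a cover element. Since $\alpha$ is a cocycle, its value on any path is invariant under subdivision, so telescoping over a fine subdivision gives $\int_\gamma\alpha=\phi(\gamma(1))-\phi(\gamma(0))$ for \emph{every} path $\gamma$; in particular $\alpha=\delta\phi$ is exact and $\F a=0$. (Test case: $X=\B S^1$, $\F a$ the generator.) What you actually need is a family of continuous local primitives $f_i$ on $U_i$ that differ by locally constant functions on overlaps --- equivalently, a single continuous $\B R$-equivariant primitive living on the covering space associated with $\F a$, not on $X$. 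Your fallback paragraph points at the right object but gets the logic backwards: for an arbitrary singular cocycle $\alpha$ representing $\F a$ (a bare homomorphism on singular chains), the primitive $F$ of $p^*\alpha$ on $\widetilde X$ is just \emph{some} function and need not be continuous; asserting that ``one can choose it continuous'' for the $\alpha$ you are still constructing is circular. Likewise, $\F a|_{U_i}=0$ only gives you \emph{some} $f_i$ with $\delta f_i=\alpha|_{U_i}$, not a continuous one.

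The repair, which is the paper's route, reverses the order of construction: first build the continuous equivariant primitive, then \emph{define} $\alpha$ from it. Concretely, form the affine line bundle $E=\widetilde X\times_{\pi_1X}\B R\to X$ associated with the holonomy homomorphism $\F a\colon\pi_1X\to\B R$; paracompactness and contractibility of the fibre give a continuous section, hence a continuous $\B R$-equivariant function $\B a$ on the cover $X_{\F a}=\widetilde X\times_{\pi_1X}\B R^\delta$. One then sets $\int_\gamma\alpha:=\B a(\tilde\gamma(1))-\B a(\tilde\gamma(0))$, checks that this singular cocycle represents $\F a$ (this is the correspondence spelled out in the appendix, Lemma~\ref{L:singular}), and observes that $\F K_\alpha(g)$ descends from the manifestly continuous function $\B a\circ\tilde g-\B a$. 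Your partition-of-unity idea can be made to implement the same thing (average local branches of the primitive to produce the section), but as written the existence of the cocycle with continuous local primitives --- the whole point of the lemma --- is deferred to ``routine verifications'' after being formulated in a way that cannot hold. Your paragraph on deducing continuity of $\F K_\alpha(g)$ from local primitives is fine once corrected to use $f_i$ and $f_j$ (or the equivariant $\B a$) in place of a global $\phi$.
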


\begin{remark}
If $X$ is a differentiable manifold and then every real
cohomology class is represented by a smooth and closed differential
form $\alpha$. It follows that for any diffeomorphism
$h\in\Diff(X,\F a)$ the function $\F K_\alpha(h)$ is smooth.
\end{remark}

\begin{proof}[Proof of Lemma \ref{L:continuous}]
Let us consider the real numbers $\B R$ endowed with with the usual order
topology and consider the bundle
$$
\B R\to E=\widetilde X\times_{\pi_1 X}\B R\stackrel{p}\to X.
$$
Since the fibre is contractible and the base is paracompact
it admits a continuous section $s\colon X\to E$. Such a section
defines a continuous equivariant function
$\B a\colon E\to \B R$ by the identity
$p[\tilde x,t]=\B a[\tilde x,t]+sp[\tilde x,t]$.
The equivariance means that
$
\B a[\tilde x,t+s]=\B a[\tilde x,t]+s.
$

Let $X_\F a=\widetilde X\times_{\pi_1 X}\B R^\delta$ be a covering
associated with the class $\F a$, where $\B R^{\delta}$ denotes
the real numbers equipped with the discrete topology. 
Observe that $X_{\F a}$ is equal to
$E$ as a set but it has a finer topology.  
Thus $\B a\colon X_{\F a}\to\B R$ is still a continuous function.

Let  $\tilde g\in \Homeo(X_{\F a})$ be an $\B R$-equivariant lift
of $g\in\Homeo(X,\F a)$. Define a continuous function
$\widehat{\F K}(g)\colon X_{\F a}\to \B R$ by
$$
\widehat{\F K}(g)[\tilde x,t] :=
\B a\left(\tilde g[\tilde x,t]\right)-\B a[\tilde x,t].
$$
Since both $\tilde g$ and $\B a$ are $\B R$-equivariant the function
$\widehat{\F K}(g)$ is $\B R$-invariant and thus descends to a
continuous function $\F K(g)\colon X\to \B R$.

Let us show that $\F K=\F K_\alpha$.  Let $\gamma$ be a path between $x$
and $y$.  Let $\tilde\gamma$ be its lift with endpoints at
$\tilde x$ and $\tilde y$.  Then
\begin{align*}
\F K(g)(y)-\F K(g)(x)
&=\left(\B a(\tilde g\tilde y)-\B a(\tilde g\tilde x)\right)-\left(\B a(\tilde y)-\B a(\tilde x)\right)\\
&=\int_{g\gamma}\alpha-\int_\gamma\alpha=\int_\gamma g^*\alpha-\alpha.
\end{align*}
The second equality above follows from the bijective correspondence 
between singular one-cocycles and $\B R$-equivariant functions
$X_{\F a}\to \B R$ up to the constants. For the convenience
of the reader we explain this folklore fact in Section
\ref{S:singular}.
\end{proof}

%%%%%%%%%%%%%%%%%%%%%%%%%%%%%%%%%%%%%%%%%%%%%%%%%%%%%%%%%%%%%%%%%%%%%
\subsection{A seminorm on $\Homeo(X,\F a)$}\label{SS:pseudonorm}
%%%%%%%%%%%%%%%%%%%%%%%%%%%%%%%%%%%%%%%%%%%%%%%%%%%%%%%%%%%%%%%%%%%%%
Let $X$ be a compact space.
Let us define a seminorm of an element $g\in\Homeo(X,\F a)$
by
$$
\|g\|_\alpha:=
\sup_{x,y\in X}|\F K_{\alpha}(g)(y) - \F K_{\alpha}(g)(x)|.
$$
This means that $\|\cdot\|_{\alpha}$ is symmetric and
satisfies the triangle inequality.
The finiteness of $\|g\|_{\alpha}$ is a consequence of
the compactness of $X$ according to Lemma \ref{L:continuous}.
It follows that if $\Gamma\subset\Homeo(X,\F a)$ is a subgroup
generated by a finite set $S$ then
$$
C\cdot|g|\geq \|g\|_\alpha,
$$
where $C:=\max \{\|s\|_\alpha| s\in S\}$ and $|g|$ denotes the word norm
of $g\in \Gamma$. This is just a special case of the standard
and straightforward to prove fact that 
any seminorm on a group is Lipschitz 
with respect to the word norm.

%%%%%%%%%%%%%%%%%%%%%%%%%%%%%%%%%%%%%%%%%%%%%%%%%%%%%%%%%%%%%%
\subsection{Proof of Theorem \ref{T:distortion}}\label{SS:proof1}
%%%%%%%%%%%%%%%%%%%%%%%%%%%%%%%%%%%%%%%%%%%%%%%%%%%%%%%%%%%%%%
Let $g$ be homeomorphisms
of $X$ preserving the class $\F a$ and Borel probability measures $\mu$ and $\nu$.
Recall that we need to show that $g$
is undistorted in $\Homeo(X,\F a)$ if $\mu$ and $\nu$ are Nielsen nonequivalent. 

Let $\Gamma\subset \Homeo(X,\F a)$ be an arbitrary finitely
generated group containing $g$. As we mentioned above its inclusion is
Lipschitz with respect to the word metric $|\cdot|$ on $\Gamma$ and
the seminorm $\|\cdot\|_{\alpha}$ on $\Homeo(X.\F a)$.

Observe that the map defined by
$$
\Homeo(X,\F a) \ni h\mapsto \int \F K_\alpha(h)(\mu-\nu)\in\B R
$$
is one-Lipschitz with respect to the seminorm $\|\cdot\|_\alpha$ and 
it is a homomorphism on the cyclic group generated by $g$ (in fact, on a group
of homeomorphisms preserving $\F a$ as well as measures $\mu$ and $\nu$).
From this we get the following estimate of the word norm of $g$.
\begin{align*}
C\frac{|g^n|}n&\geq \frac{\|g\|_{\alpha}}n\\
&\geq \frac 1n\cdot \left|\int \F K_\alpha(g^n)(\mu-\nu)\right|\\
&= \left |\int \F K_\alpha(g)(\mu-\nu)\right|>0
\end{align*}
This shows that the translation length of $g$ in $\Gamma$ is
positive. Since $\Gamma$ is an arbitrary finitely generated
subgroup of $\Homeo(X,\F a)$, this proves that $g$ is undistorted in 
$\Homeo(X,\F a)$.
\qed

\begin{remark}
The above proof is essentially the same as the proof of the Polterovich
theorem \cite[Section 5.3]{MR2003i:53126} about the distortion in the group 
of Hamiltonian diffeomorphisms of a closed symplectically hyperbolic manifold 
presented by the authors in \cite{MR2770429}. 
\end{remark}

%%%%%%%%%%%%%%%%%%%%%%%%%%%%%%%%%%%%%%%%%%%%%%%%%%%%%%%%%%%%%%
\subsection{Proof of Proposition \ref{P:scc}}\label{SS:scc}
%%%%%%%%%%%%%%%%%%%%%%%%%%%%%%%%%%%%%%%%%%%%%%%%%%%%%%%%%%%%%%
Recall that we need to prove that given
simple curve closed curves $\ell_1$ and $\ell_2$ invariant by a homeomorphism
$g\in\Homeo(X,\F a)$ if $\rho_1\langle\F a,\ell_1\rangle \neq \rho_2\langle\F a,\ell_2\rangle$ then
$g$-invariant measures supported on $\ell_i$ and Nielsen nonequivalent.

Let $\alpha$ represent $\F a$.  It is clear that if $\mu_i$ denote any invariant measure
supported on $\ell_i$ then
$$
\int_{\ell_i}\left(\int_x^{gx} \alpha\right)\mu(dx)=\rho_i\int_{\ell_i}\alpha=\rho_i\langle\F a,\ell_i\rangle.
$$
Notice that inner integral depends on the choice of curves between $x$ and $gx$ but once
such choice is made depending continuously on $x$ the value of the integral modulo integers
would not depend on that choice.

Let $\gamma$ be a path between $x$ and $y$, and let $\eta_x$ and $\eta_y$ be paths between
$x$ and $gx$ and $y$ and $gy$ respectively.  Let us choose $\eta_y$ to be a concatenation
of $-\gamma$, $\eta_x$, and $g\gamma$.  Then
$\int_{\eta_x}\alpha+\int_\gamma g^*\alpha-\int_{\eta_y}\alpha-\int_\gamma\alpha=0$.
This can be rewritten as
$$
\F K_\alpha(g)(y)-\F K_\alpha(g)(x) = \int_{\eta_y}\alpha-\int_{\eta_x}\alpha.
$$

Averaging the above equality over $x$ and $y$ with respect to $\mu_1$ and $\mu_2$ respectively
we get
$$
\int \F K_\alpha(g)(\mu_1-\mu_2)=\rho_1\int_{\ell_1}\alpha-\rho_2\int_{\ell_2}\alpha\neq 0.
$$
This proves the claim.
\qed

\begin{corollary}
Let $G\subset \Homeo(X)$ be a group of homeomorphisms
acting trivially on $H^1(X;\B R)$.
Let $g$ be a homeomorphism distorted in $G$.
Let $\ell_1,\ell_2\subset X$ be oriented simple closed curves
preserved by $g$.  Assume also that the classes $[\ell_i]$ are
nonzero in $H^1(X;\B R)$. Let $\rho_1$ and $\rho_2$ be the
topological rotation numbers associated with the action
of $g$ on $\ell_1$ and $\ell_2$ respectively.
Then the following statements hold:
\begin{enumerate}
\item
There exist nonzero integers $k_1,k_2\in \B Z$ such that
$k_1\rho_1 = k_2\rho_2$.
\item
If, moreover, the classes $[\ell_1]$ and $[\ell_2]$ in $H^1(X;\B R)$
are linearly independent then $\rho_1,\rho_2\in \B Q/\B Z$.
\end{enumerate}
\end{corollary}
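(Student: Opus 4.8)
The plan is to reduce the statement to Proposition \ref{P:scc} together with the hypothesis that $g$ is distorted, and then to play off the two pairings $\langle\F a,\ell_i\rangle$ against the rotation numbers for a suitable choice of integral class $\F a$. First I would record the contrapositive form of Proposition \ref{P:scc}: if $g$ is distorted and $\F a\in H^1(X;\B Z)$ is preserved by $g$ (which is automatic here, since $G$ acts trivially on $H^1(X;\B R)$, hence on the image of $H^1(X;\B Z)$), then for any $g$-invariant measures supported on $\ell_1$ and $\ell_2$ they must be Nielsen equivalent, which by the proof of that proposition forces
$$
\rho_1\langle\F a,\ell_1\rangle=\rho_2\langle\F a,\ell_2\rangle \pmod{\B Z}.
$$
Actually, since $\langle\F a,\ell_i\rangle\in\B Z$, the products $\rho_i\langle\F a,\ell_i\rangle$ already lie in $\B R/\B Z$ in the natural way, and the equality above holds in $\B R/\B Z$; this is the single structural input I will exploit.

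For part (1), I would use that $[\ell_1]$ and $[\ell_2]$ are nonzero in $H^1(X;\B R)$—equivalently, nonzero in $H_1(X;\B R)$ after the obvious identification, so that there exist integral cohomology classes pairing nontrivially with them. Choose $\F a\in H^1(X;\B Z)$ with $a_1:=\langle\F a,\ell_1\rangle\neq 0$; set $a_2:=\langle\F a,\ell_2\rangle\in\B Z$. If $a_2\neq 0$ as well, the displayed relation gives $a_1\rho_1=a_2\rho_2$ in $\B R/\B Z$, i.e. $a_1\rho_1-a_2\rho_2\in\B Z$, and I would like to upgrade this to an honest equality $k_1\rho_1=k_2\rho_2$ in $\B R$ with $k_i\neq 0$. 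The cleanest way is to note that for \emph{every} integral $\F a$ with $m:=\langle\F a,\ell_1\rangle$ and $n:=\langle\F a,\ell_2\rangle$ one gets $m\rho_1\equiv n\rho_2\pmod{\B Z}$; choosing two such classes $\F a,\F a'$ whose pairings with $\ell_1$ differ (possible because $[\ell_1]\neq 0$, so the pairing map $H^1(X;\B Z)\to\B Z$, $\F b\mapsto\langle\F b,\ell_1\rangle$ is nonzero, and one can add to $\F a$ a class vanishing on $\ell_1$ to adjust $n$, or scale) and subtracting the two congruences kills the $\rho_2$-term modulo integers and pins $\rho_1$ to a rational multiple of $1$, unless the arithmetic degenerates. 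I expect the honest-equality upgrade (as opposed to merely mod $\B Z$) to be the main fiddly point: one has to argue that if $a\rho_1\in\B Z$ for some nonzero integer $a$ then $\rho_1\in\B Q$, whence $k_1\rho_1\in\B Z$ for some $k_1\neq 0$; and symmetrically for $\rho_2$; finally combine. If on the other hand $a_2=\langle\F a,\ell_2\rangle=0$ for \emph{every} choice making $a_1\neq 0$—which cannot happen when $[\ell_2]\neq 0$ unless $[\ell_2]$ is a torsion multiple of $[\ell_1]$ in $H^1$—one instead picks $\F a$ detecting $\ell_2$ and repeats, so in all cases one lands on a nontrivial $\B Z$-linear relation between $\rho_1$ and $\rho_2$.

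For part (2), assume $[\ell_1]$ and $[\ell_2]$ are linearly independent in $H^1(X;\B R)$; then over $\B Z$ one can realise, for each target pair $(m,n)\in\B Z^2$ in a finite-index sublattice, an integral class $\F a$ with $\langle\F a,\ell_1\rangle=m$, $\langle\F a,\ell_2\rangle=n$ (this uses that the map $H^1(X;\B Z)\to\B Z^2$ given by pairing with $[\ell_1],[\ell_2]$ has image a finite-index subgroup, by linear independence and finite generation). Taking $(m,n)=(1,0)$ in that sublattice—say $(k,0)$ with $k\neq 0$—forces $k\rho_1\equiv 0\pmod{\B Z}$, hence $\rho_1\in\frac1k\B Z\subset\B Q$, so $\rho_1\in\B Q/\B Z$; symmetrically with $(0,k')$ gives $\rho_2\in\B Q/\B Z$. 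The obstacle here is purely the lattice-realisation lemma: showing the pairing map to $\B Z^2$ has finite-index image. That follows because its real version $H^1(X;\B R)\to\B R^2$ is surjective (the two functionals $[\ell_i]$ are linearly independent), and $H^1(X;\B Z)$ spans $H^1(X;\B R)$ as a $\B Q$-vector space up to torsion; so the image is a subgroup of $\B Z^2$ spanning $\B R^2$, hence of finite index. With that lemma in hand both parts are short, and I would present the lattice lemma first, then deduce (1) and (2) as above.
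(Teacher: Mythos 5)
Your proposal takes the same route as the paper: from distortion plus Proposition \ref{P:scc} (combined with Theorem \ref{T:distortion} and the existence, by compactness, of $g$-invariant measures on the invariant circles) one extracts the relation $\langle\F a,\ell_1\rangle\,\rho_1=\langle\F a,\ell_2\rangle\,\rho_2$ in $\B R/\B Z$ for \emph{every} integral class $\F a$ fixed by $g$, and then one chooses $\F a$ suitably: both pairings nonzero for (1), and $\langle\F a,\ell_1\rangle=0\neq\langle\F a,\ell_2\rangle$ for (2). Your ``lattice-realisation lemma'' is exactly the justification the paper leaves implicit in the phrase ``choosing $\alpha$ such that\dots''.

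The one point to correct is your handling of (1). The $\rho_i$ are by definition elements of $\B R/\B Z$ (see Proposition \ref{P:scc}), so the asserted identity $k_1\rho_1=k_2\rho_2$ lives in $\B R/\B Z$; once you have $a_1\rho_1=a_2\rho_2$ there with $a_1,a_2\neq0$, you are done, taking $k_i=a_i$. The ``upgrade to an honest equality in $\B R$'' that you single out as the main fiddly point is not required, and your proposed route to it does not work under the hypotheses of (1): to ``kill the $\rho_2$-term'' by subtracting two congruences you need a class vanishing on $\ell_2$ but not on $\ell_1$, which is exactly the linear-independence hypothesis of (2) and fails when $[\ell_1]$ and $[\ell_2]$ are proportional; correspondingly $\rho_1$ need not be rational in the setting of (1) (both rotation numbers can be equal and irrational). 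Also, the existence of an integral class with both pairings nonzero is cleaner than your case analysis: the classes annihilating $\ell_1$ and those annihilating $\ell_2$ form two proper subgroups of $H^1(X;\B Z)$, and a group is never the union of two proper subgroups.
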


\begin{proof}
Indeed,
Let $\alpha$ be an one-cocycle with integral periods.

Choosing $\alpha$ such that
$\int_{\ell_i}\alpha \neq 0$ proves the first statement.

To prove the second assertion, we choose $\alpha$ such that
$\int_{\ell_1}\alpha = 0 \neq \int_{\ell_2}\alpha$. It follows
that $\rho_2\cdot \int_{\ell_2}\alpha=0$ and, since
$\int_{\ell_2}\alpha$ is an integer, it implies that
$\rho_2 \in \B Q/\B Z$. The rationality of $\rho_1$ is
proved similarly.
\end{proof}

%%%%%%%%%%%%%%%%%%%%%%%%%%%%%%%%%%%%%%%%%%%%%%%%%%%%%%%%%%%%%%
\section{Further results}\label{S:distortion}
%%%%%%%%%%%%%%%%%%%%%%%%%%%%%%%%%%%%%%%%%%%%%%%%%%%%%%%%%%%%%%%%
\subsection{The cocycle $\F G_{x,\alpha}$}\label{SS:explicit}
%%%%%%%%%%%%%%%%%%%%%%%%%%%%%%%%%%%%%%%%%%%%%%%%%%%%%%%%%%%%%%%%
Recall that  $X$ is a path-connected, topological space admitting
a universal cover $\widetilde X\to X$ and
${\F a}\in H^1(X;\B R)$ is a cohomology class represented by
a singular one-cocycle $\alpha$.
Let $x\in X$ be a reference point.
Define a real valued two-cocycle $\F G_{x,\alpha}$
on the group $\Homeo(X,\F a)$ of homeomorphisms of
$X$ preserving the class $\F a$ by the following
formula
$$
\F G_{x,\alpha}(g,h):=\int_\gamma g^*\alpha - \alpha
$$
where $\gamma$ is a path from $x$ to $hx$.

\begin{lemma}\label{L:K-G}
\  
\begin{enumerate}
\item\label{i:K-G}
If $h$ and $g$ are homeomorphisms preserving $\F a=[\alpha]$ then
$$
\F G_{x,\alpha}(g,h)=\F K_\alpha(g)(hx)-\F K_\alpha(g)(x).
$$
\item
The value $\F G_{x,\alpha}(g,h)$ does not depend on the
choice of a path from $x$ to $hx$.
\item
The function $\F G_{x,\alpha}$ is a two-cocycle on $\Homeo(X,\F a)$.
That is it satisfies the following identity:
$$
\F G_{x,\alpha}(h,k)-\F G_{x,\alpha}(gh,k)+
\F G_{x,\alpha}(g,hk)-\F G_{x,\alpha}(g,h)=0.
$$
\item
The cohomology class of the cocycle $\F G_{x,\alpha}$
depends neither on the choice of the reference
point $x$ nor on the choice of the cocycle~$\alpha$ (only on the
cohomology class $\F a$).
\item If either $g$ preserves $\alpha$ or $h$ preserves $x$
then $\F G_{x,\alpha}(g,h)=0$.
\qed
\end{enumerate}
\end{lemma}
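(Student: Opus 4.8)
The plan is to reduce all five assertions to two facts from Section~\ref{SS:one-cocycle}: the defining relation $\F K_\alpha(g)(z)-\F K_\alpha(g)(w)=\int_\gamma(g^*\alpha-\alpha)$, valid for any path $\gamma$ from $w$ to $z$, and the crossed-homomorphism identity $\F K_\alpha(gh)=\F K_\alpha(g)\circ h+\F K_\alpha(h)$. I would first establish (2): since $g^*\alpha-\alpha=\delta\F K_\alpha(g)$ is an exact one-cocycle, its pairing with a $1$-chain vanishes on boundaries, so $\int_\gamma(g^*\alpha-\alpha)$ depends only on the endpoints of $\gamma$; in particular $\F G_{x,\alpha}(g,h)$ is well defined. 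Part~\eqref{i:K-G} is then just the defining relation of $\F K_\alpha(g)$ read along a path from $x$ to $hx$: $\F K_\alpha(g)(hx)-\F K_\alpha(g)(x)=\int_\gamma(g^*\alpha-\alpha)=\F G_{x,\alpha}(g,h)$.

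For the $2$-cocycle identity (3) I would substitute \eqref{i:K-G} into the four terms and expand $\F K_\alpha(gh)(kx)$ and $\F K_\alpha(gh)(x)$ by the crossed-homomorphism identity. After this substitution the alternating sum is built from only the five quantities $\F K_\alpha(h)(kx),\ \F K_\alpha(h)(x),\ \F K_\alpha(g)(hkx),\ \F K_\alpha(g)(hx),\ \F K_\alpha(g)(x)$, each of which appears exactly twice with opposite signs; hence it is zero.

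For (4) I would treat the two invariances in turn. To change the reference point from $x$ to $x'$, fix a path $\gamma_0$ from $x$ to $x'$ and build a path from $x'$ to $hx'$ as the concatenation of $-\gamma_0$, a path from $x$ to $hx$, and $h\gamma_0$; integrating $g^*\alpha-\alpha$ over it gives
\[
\F G_{x',\alpha}(g,h)-\F G_{x,\alpha}(g,h)=\bigl(\F K_\alpha(g)(hx')-\F K_\alpha(g)(hx)\bigr)-\bigl(\F K_\alpha(g)(x')-\F K_\alpha(g)(x)\bigr).
\]
Putting $c(g):=\F K_\alpha(g)(x')-\F K_\alpha(g)(x)$ and rewriting the first bracket as $c(gh)-c(h)$ by means of $\F K_\alpha(g)\circ h=\F K_\alpha(gh)-\F K_\alpha(h)$ identifies the right-hand side with the group coboundary of $-c$. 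To change the cocycle, write $\alpha'-\alpha=\delta f$ with $f\in C^0(X;\B R)$; then $g^*\alpha'-\alpha'=(g^*\alpha-\alpha)+\delta(f\circ g-f)$, and integrating over a path from $x$ to $hx$ gives
\[
\F G_{x,\alpha'}(g,h)-\F G_{x,\alpha}(g,h)=f(ghx)-f(hx)-f(gx)+f(x),
\]
which is the coboundary of $g\mapsto-(f(gx)-f(x))$. Hence $[\F G_{x,\alpha}]$ depends only on $\F a$.

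Finally, (5) is immediate: if $g^*\alpha=\alpha$ the integrand in the definition of $\F G_{x,\alpha}$ vanishes identically, and if $hx=x$ one may take $\gamma$ to be the constant path at $x$ (or apply \eqref{i:K-G}). I do not expect a genuine difficulty in any step; the only parts that call for attention are the sign bookkeeping in the two coboundary computations of (4) and making sure the concatenation of paths in the change-of-reference-point argument is oriented consistently with the convention for $\F K_\alpha$.
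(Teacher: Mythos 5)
Your proof of part~\eqref{i:K-G} coincides with the paper's, which derives it directly from $g^*\alpha-\alpha=\delta\bigl(\F K_\alpha(g)\bigr)$; the paper explicitly leaves the remaining items to the reader as straightforward and unused later. Your completions of those items are correct: the exactness of $g^*\alpha-\alpha$ gives (2), the five-term cancellation gives (3), and the two coboundary computations in (4) check out, the change of reference point contributing $\delta(-c)$ with $c(g)=\F K_\alpha(g)(x')-\F K_\alpha(g)(x)$ and the change of cocycle contributing $\delta$ of $g\mapsto -(f(gx)-f(x))$.
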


\begin{proof}
For the sake of consistency we prove part \ref{i:K-G} of the lemma,
leaving the other, straightforward items, which will not be used
in the paper, to the reader.

It is an immediate consequence the definition of the 
cocycle~$\F K_{\alpha}$. Indeed, we have
$$
\F G_{x,\alpha}(g,h)=\int_{x}^{hx}g^*\alpha - \alpha 
=\int_{x}^{hx}\delta(\F K_{\alpha}(g))
=\F K_\alpha(g)(hx)-\F K_\alpha(g)(x).
$$
\end{proof}

In what follows, as the one-cocycle $\alpha$ is fixed in this section,
we would write $\F G_x$ instead $\F G_{x,\alpha}$ for short.

%%%%%%%%%%%%%%%%%%%%%%%%%%%%%%%%%%%%%%%%%%%%%%%%%%%%%%%%%%%%%%%%
\subsection{Quasimorphisms}\label{SS:q-m}
%%%%%%%%%%%%%%%%%%%%%%%%%%%%%%%%%%%%%%%%%%%%%%%%%%%%%%%%%%%%%%
Let $\F q\colon G\to \B R$ be a map defined on a group $G$.
The {\bf defect} $D(\F q)$ of the map $\F q$ is defined to be
$$
D(\F q):=\sup_{g,h\in G}|\F q(g) - \F q(gh) + \F q(h)|.
$$
If the defect of $\F q$ is finite then $\F q$ is called
a {\bf quasimorphism}. A quasimorphism $\F q$ is called
{\bf homogeneous} if $\F q(g^n)=n\F q(g)$ for all $n\in \B Z$
and $g\in G$. For every quasimorphism $\F q$ the formula
$$
\widehat {\F q}(g):=\lim_{n\to\infty}\frac{\F q(g^n)}{n}
$$
defines a homogeneous quasimorphism called the homogenisation
of~$\F q$. Moreover, $|\widehat {\F q}(g)-\F q(g)|\leq D$
for all $g\in G$ \cite[Lemma 2.21]{MR2527432}. Thus $\F q$ is unbounded
if and only if so is its homogenisation. 

\begin{proposition}\label{P:q-m}
Let $\F a\in H^1(X;\B R)$.
Let $G\subseteq \Homeo(X,\F a)$ be a subgroup on which
the cocycles $\F G_x$ and $\F G_{y}$ are bounded,
for some $x,y\in X$.
Then the map
$\F q\colon G\to \B R$ defined by
$$
\F q(g):= \F K_{\alpha}(g)(y) - \F K_{\alpha}(g)(x)
$$
is a quasimorphism on $G$ and
$D(\F q)\leq\|\F G_x-\F G_y\|\leq\|\F G_x\|+\|\F G_y\|$,
where $\|\cdot\|$ denotes the supremum norm of a bounded
function. 
\end{proposition}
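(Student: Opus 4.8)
The plan is to express the defect of $\F q$ directly in terms of the cocycles $\F G_x$ and $\F G_y$, using the cocycle identity for $\F K_\alpha$ established in Section~\ref{SS:one-cocycle}. Recall that $\F K_\alpha$ satisfies $\F K_\alpha(gh) = \F K_\alpha(g)\circ h + \F K_\alpha(h)$. First I would evaluate, for fixed $g,h\in G$, the quantity $\F q(g) - \F q(gh) + \F q(h)$. Writing out each term:
\begin{align*}
\F q(g) - \F q(gh) + \F q(h)
&= \bigl(\F K_\alpha(g)(y) - \F K_\alpha(g)(x)\bigr)
 - \bigl(\F K_\alpha(gh)(y) - \F K_\alpha(gh)(x)\bigr)\\
&\qquad + \bigl(\F K_\alpha(h)(y) - \F K_\alpha(h)(x)\bigr).
\end{align*}
Substituting $\F K_\alpha(gh) = \F K_\alpha(g)\circ h + \F K_\alpha(h)$ into the middle term, the $\F K_\alpha(h)(y)$ and $\F K_\alpha(h)(x)$ contributions cancel against the last term, and one is left with
$$
\F q(g) - \F q(gh) + \F q(h)
= \bigl(\F K_\alpha(g)(y) - \F K_\alpha(g)(hy)\bigr)
 - \bigl(\F K_\alpha(g)(x) - \F K_\alpha(g)(hx)\bigr).
$$

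Next I would recognize each bracketed difference as (minus) a value of $\F G$. By Lemma~\ref{L:K-G}\eqref{i:K-G}, $\F G_x(g,h) = \F K_\alpha(g)(hx) - \F K_\alpha(g)(x)$, and similarly for $y$. Hence the displayed expression equals $\F G_y(g,h) - \F G_x(g,h)$. Therefore
$$
\F q(g) - \F q(gh) + \F q(h) = \F G_y(g,h) - \F G_x(g,h) = (\F G_y - \F G_x)(g,h).
$$
Taking the supremum over all $g,h\in G$ gives $D(\F q) = \sup_{g,h}|(\F G_y - \F G_x)(g,h)| = \|\F G_x - \F G_y\|$, which is finite by the hypothesis that both $\F G_x$ and $\F G_y$ are bounded on $G$; the bound $\|\F G_x - \F G_y\| \le \|\F G_x\| + \|\F G_y\|$ is the triangle inequality for the supremum norm. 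This simultaneously shows $\F q$ is a quasimorphism and establishes the claimed estimate on its defect.

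I do not expect a serious obstacle here: the proof is essentially a bookkeeping exercise built on the cocycle identity for $\F K_\alpha$ and the translation formula of Lemma~\ref{L:K-G}. The one point requiring a little care is the algebraic cancellation in the first step — making sure the composition $\F K_\alpha(g)\circ h$ is evaluated at the correct points ($hx$ and $hy$) and that the signs line up so that exactly the $\F K_\alpha(h)$ terms drop out. A secondary remark worth including is that $\F q$ is well-defined independently of the additive-constant ambiguity in $\F K_\alpha$, since $\F q(g)$ is a difference of values of the same function $\F K_\alpha(g)$; this was already observed in the discussion preceding Theorem~\ref{T:distortion}, so it need only be mentioned in passing.
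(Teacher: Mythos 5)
Your proposal is correct and takes essentially the same route as the paper: a direct expansion of $\F q(g)-\F q(gh)+\F q(h)$ via the cocycle identity for $\F K_\alpha$, followed by the identification of the surviving terms through Lemma~\ref{L:K-G}. The only slip is a harmless sign in that identification: the surviving expression is $\F G_x(g,h)-\F G_y(g,h)$ rather than $\F G_y(g,h)-\F G_x(g,h)$, which does not affect the defect estimate since only absolute values enter.
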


\begin{proof}
This is a straightforward computation using the cocycle identity for $\F K_{\alpha}$.
\begin{align*}
\F q(f) - \F q(fg) + \F q(g) =&
\F K_{\alpha}(f)(y) - \F K_{\alpha}(f)(x)\\
&-(\F K_{\alpha}(f)(gy) + \F K_{\alpha}(g)(y)  - \F K_{\alpha}(f)(gx) - \F K_{\alpha}(g)(x))\\
&+\F K_{\alpha}(g)(y) - \F K_{\alpha}(g)(x)\\
=&\F K_{\alpha}(f)(gx) - \F K_{\alpha}(f)(x) - (\F K_{\alpha}(f)(gy) - \F K_{\alpha}(f)(y))\\
=&\F G_x(f,h) - \F G_y(f,g).
\end{align*}
\end{proof}

\begin{example}\label{E:torus}
In this example we show that the boundedness of $\F G_x$ depends
on the choice of a point $x\in X$.
Let $X=\B R/\B Z\times \B R\cup\{\infty\}$ be the two-dimensional torus.
Let $\alpha$ be a singular one-cocycle defined by
$$
\int_{\gamma}\alpha := \tilde{\gamma}(1)-\tilde{\gamma}(0),
$$
where $\tilde{\gamma}\colon [0,1]\to \B R$ is a lift of
the composition of $\gamma$ followed by the projection
onto $\B R/\B Z$. Let $\F a$ be the class of $\alpha$.

Let $g\in \Homeo(X,\F a)$ be a homeomorphism defined by
$$
g(t,x):= (t+|x+1|-|x|,x+1).
$$
Then $\F K_{\alpha}(g^n)(t,x) = |x+n| - |x|$ and it follows that
\begin{align*}
\F G_{(0,0)}(g^m,g^n) &=
\F K_{\alpha}(g^m)(g^n(0,0))-\F K_{\alpha}(g^m)(0,0)\\
&= |m+n| - |n| -|m|.
\end{align*}
This shows that $\F G_{(0,0)}$ is unbounded (in fact, the
cocycle $\F G_{(t,x)}$ is un\-boun\-ded whenever $x$ is finite).
On the other hand, $g$ acts trivially on the
circle $\B R/\B Z\times \{\infty\}$ and hence
$\F G_{(t,\infty)}=0$.
\hfill $\diamondsuit$
\end{example}

\begin{example}
If $g$ is a time-one map of a gradient flow then $\F G_{x,\alpha}$
is bounded at every $x$ and the local rotation number of $g$ 
is equal to zero. 
\hfill
$\diamondsuit$
\end{example}

\begin{theorem}\label{T:q-m}
Let $\F a\in H^1(X;\B R)$ and let $g\in \Homeo(X,\F a)$
and assume that $X$ is compact.
Suppose that for some points $x,y \in X$ the cocycles
$\F G_x$ and $\F G_y$ are bounded on the cyclic subgroup
$\langle g\rangle \subset \Homeo(X,\F a)$ ge\-ne\-rated by $g$.
If the above quasimorphism $\F q\colon \langle g\rangle \to \B R$ 
%defined in Proposition~\ref{P:q-m}
is unbounded then $g$ is undistorted in $\Homeo(X,\F a)$.
\end{theorem}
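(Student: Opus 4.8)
The plan is to combine the quasimorphism bound from Proposition \ref{P:q-m} with the seminorm Lipschitz estimate of Section \ref{SS:pseudonorm}, exactly in the spirit of the proof of Theorem \ref{T:distortion} but with the pair of invariant measures replaced by the pair of reference points $x,y$. First I would recall the quasimorphism $\F q(h) = \F K_\alpha(h)(y) - \F K_\alpha(h)(x)$ on $\langle g\rangle$; by Proposition \ref{P:q-m} its defect is finite (bounded by $\|\F G_x\| + \|\F G_y\|$ on $\langle g\rangle$), and by hypothesis its homogenisation $\widehat{\F q}$ is nonzero, say $\widehat{\F q}(g) = \lim_n \F q(g^n)/n \neq 0$. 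The key point is that $|\F q(h)| \le \|h\|_\alpha$ for every $h$, directly from the definition of the seminorm $\|\cdot\|_\alpha$ as a supremum of differences $|\F K_\alpha(h)(y')-\F K_\alpha(h)(x')|$ over all pairs of points.

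Next, fix an arbitrary finitely generated subgroup $\Gamma \subseteq \Homeo(X,\F a)$ with $g \in \Gamma$, with word norm $|\cdot|$ for a finite generating set $S$, and set $C := \max\{\|s\|_\alpha : s \in S\}$, which is finite since $X$ is compact (Lemma \ref{L:continuous}). Then for all $n$ one has the chain
\begin{align*}
C\,\frac{|g^n|}{n} &\ge \frac{\|g^n\|_\alpha}{n} \ge \frac{|\F q(g^n)|}{n}.
\end{align*}
Letting $n \to \infty$, the left side converges to $C\,\tau(g)$ and the right side converges to $|\widehat{\F q}(g)| > 0$, so $\tau(g) \ge |\widehat{\F q}(g)|/C > 0$. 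Since $\Gamma$ was arbitrary, $g$ is undistorted in $\Homeo(X,\F a)$.

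There is essentially no serious obstacle: the only thing to be careful about is that Proposition \ref{P:q-m} is stated for a subgroup $G$ on which both cocycles are bounded, and here we only assume boundedness on the cyclic subgroup $\langle g\rangle$ — so I would simply apply the proposition with $G = \langle g\rangle$, which is all that is needed since the word-norm estimate and the limit defining $\tau(g)$ both only involve powers of $g$. One should also note that $\F q$ being unbounded on $\langle g\rangle$ is equivalent to $\widehat{\F q}(g) \neq 0$: since $\langle g\rangle$ is cyclic, every element is a power of $g$, and a homogeneous quasimorphism on $\B Z$ is linear, so $\widehat{\F q}$ is unbounded precisely when $\widehat{\F q}(g) \neq 0$; then the standard inequality $|\widehat{\F q} - \F q| \le D(\F q)$ shows $\F q$ itself is unbounded iff $\widehat{\F q}(g) \neq 0$. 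With that remark in place the argument is complete, and Theorem \ref{T:rotation} follows as the special case $\F a \in H^1(X;\B Z)$ with $\alpha$ integer-valued, where unboundedness of $\F q$ translates into distinctness of the two local rotation numbers.
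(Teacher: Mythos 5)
Your argument is correct and is essentially identical to the paper's proof: the same chain of inequalities $C|g^n|/n \ge \|g^n\|_\alpha/n \ge |\F q(g^n)|/n \to |\widehat{\F q}(g)| > 0$ over an arbitrary finitely generated subgroup containing $g$. The extra remarks you add (that Proposition \ref{P:q-m} is applied with $G=\langle g\rangle$, and that unboundedness of $\F q$ on a cyclic group is equivalent to $\widehat{\F q}(g)\neq 0$) are details the paper leaves implicit, and they are handled correctly.
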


\begin{remark}
It is often the case that to prove that an element $g$ is 
undistorted in a group $G$ one constructs a
homogeneous quasimorphism $\F q\colon G\to \B R$ such that
$\F q(g)\neq 0$. Constructing such a quasimorphism is
in general very difficult. The advantage of the above theorem
is that we only need to check that a naturally defined
quasimorphism on a cyclic group is unbounded.
\end{remark}

\begin{proof}[Proof of Theorem \ref{T:q-m}]
Let $\Gamma$ be a finitely generated subgroup of $\Homeo(X,\F a)$
containing $g$.
Let $\widehat{\F q}\colon \langle g\rangle\to \B R$
be the homogenisation of the quasimorphism $\F q$.
The following calculation of the translation length of $g$
shows that $g$ is undistorted in $\Gamma$.
\begin{align*}
C\cdot\tau(g)& = \lim_{n\to \infty}\frac{C\cdot|g^n|}{n}\\
& \geq \lim_{n\to \infty}\frac{\|g^n\|_{\alpha}}{n}\\
%& \geq \frac1\mu\lim_{n\to \infty}\frac{|\F G(g^n,h)|}{n}\\
& \geq \lim_{n\to \infty}\frac{|\F q(g^n)|}{n}\\
& = |\widehat{\F q}(g)| > 0.
\end{align*}
Since $\Gamma$ is arbitrary,
the element $g$ is undistorted in $\Homeo(X,\F a)$.
\end{proof}

Notice that Theorem \ref{T:q-m} also implies Corollary \ref{C:distortion}.
Recall that we need to prove that if $x$ and $y$ are fixed points of $g$
and $\int_\gamma g^*\alpha-\alpha\neq 0$ then $g$ is undistorted in
$\Homeo(X,\F a)$.

First, observe that the cocycles $\F G_x$ and $\F G_y$
vanish identically on the cyclic group $\langle g \rangle$ because 
$x$ and $y$ are fixed points of $g$. By Proposition \ref{P:q-m} 
the defect of $\F q$ is zero (since it is bounded by $\|\F G_x\|+\|\F G_y\|=0$) 
and we obtain that $\F q\colon \langle g\rangle \to \B R$ 
is a homomorphism of groups. Furthermore
\begin{equation*}
\F q(g)=\F K_{\alpha}(g)(y) - \F K_{\alpha}(g)(x)= \int_{\gamma}g^*\alpha -\alpha \neq 0
\end{equation*}
according to the hypothesis. Therefore $\F q$ is unbounded and
the statement follows from Theorem \ref{T:q-m}.

%%%%%%%%%%%%%%%%%%%%%%%%%%%%%%%%%%%%%%%%%%%%%%%%%%%%%%%%%%%%%%
\subsection{Local rotation number}\label{SS:bounded}
%%%%%%%%%%%%%%%%%%%%%%%%%%%%%%%%%%%%%%%%%%%%%%%%%%%%%%%%%%%%%%
In what follows we are interested in bounded cohomology
of a group with the integer coefficients; see Gromov
\cite{MR686042} and Monod \cite{MR1840942} for a background
on bounded cohomology.

We assume that $\F a\in H^1(X;\B Z)$ and $\alpha$ is an integer-valued
one-cocycle on $X$.  Therefore $\F G_{x,\alpha}$ is an integer-valued
two-cocycle on $\Homeo(X,\F a)$.

\begin{example}
(Ghys \cite[Section 6.3]{MR1876932})
\label{E:boundedZ}
The second bounded cohomology $H^2_{\OP{b}}(\B Z;\B Z)$
of the integers with integer coefficients is isomorphic
to $\B R/\B Z$. To see this let
$\F c \colon \B Z \times \B Z\to \B Z$
be a bounded two-cocycle. As an ordinary cocycle it is
a coboundary since the second group cohomology of the group
of integers is trivial.
If $\F c=\delta\F b$ then, since $\F c$ is
bounded, the cochain $\F b$ is a quasimorphism.
The homogenisation of $\F b$ (which is a real cochain in general)
is given by $\widehat {\F b}(n)=rn$ for some real number $r\in \B R$.
The required isomorphism
$$
H^2_{\OP{b}}(\B Z;\B Z)\to \B R/\B Z
$$
is defined by $[\F c]\mapsto r +\B Z$.
\hfill $\diamondsuit$
\end{example}

Let $g\in \Homeo(X,\F a)$ and let $x\in X$ be a point for which the
cocycle $\F G_{x,\alpha}$ is bounded on the cyclic group generated by $g$.
The cohomology class
$$
\OP{\bf rot}_{x,\alpha}(g)=[\F G_{x,\alpha}]\in H_{\OP{b}}^2(\langle g\rangle;\B Z)=\B R/\B Z
$$ 
is called the {\bf local rotation number} of $g$ at the point $x\in X$.

Let us explain the geometry of the local rotation number.
Take a path $\eta_{x,1}\colon [0,1]\to~X$ from $x$ to $gx$
and let $\eta_{x,n}$ be the concatenation of paths
$g^k(\eta_{x,1})$ for $k$ ranging from $0$ to $n-1$.
Define a map $\F b_x\colon \langle g\rangle \to \B R$
by
\begin{equation}\label{Eq:b}
\F b_x(g^n):=-\int_{\eta_{x,n}}\alpha.
\end{equation}
Observe that $\delta\F b_x = \F G_x$ on the cyclic group
$\langle g\rangle$. Since $\F G_x$ is bounded on $\langle g\rangle$
we get that $\F b_x$ is a quasimorphism and that its
homogenisation satisfies $\widehat {\F b}_x(g^n)=r_x(g)n$ for
a suitable representative of the local rotation number of $g$ at $x$.
This shows that there exists a constant $C_x>0$ such that
$$
|\F b_x(g^n) - r_x(g)n|\leq C_x  
$$
for all $n\in \B Z$. We thus obtain that
\begin{equation}\label{Eq:r}
\lim_{n\to \infty} \frac{\F b_x(g^n)}{n} = r_x(g)
\end{equation}
and hence the fractional part of above limit represents the local rotation
number of $g$ at $x$:
$$
\OP{\bf rot}_{x,\alpha}(g)=r_x(g)+\B Z.
$$
Indeed, since $\alpha$ has integral
periods, the dependence of $\F b_x(g^n)$ on the choice of
the path $\eta_{x,1}$ is up to an integer constant only.
This implies that the above computation of the
local rotation number does not depend on the choice
of a path $\eta_{x,1}$. The next result immediately follows
from the above discussion.

\begin{proposition}\label{P:rotation}
Let $X$ be a smooth compact manifold and let $p\colon X_{\F a}\to X$
be the cyclic covering associated with $\F a\in H^1(X;\B Z)$.
Assume that $\F a$ is represented by a closed smooth one-form $\alpha$.
Let $F\colon X_{\F a}\to \B R$ be a smooth function such that
$\OP{d}F=p^*\alpha$. Then
$$
\OP{\bf rot}_{x,\alpha}(g) = -\lim_{n\to \infty}\frac{F(\tilde g^n(\tilde x))}{n} + \B Z
$$
provided that the limit exists.\qed
\end{proposition}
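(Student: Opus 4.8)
The plan is to translate the statement into the cyclic cover $X_{\F a}$ and read off the answer from the discussion preceding the proposition. Recall that there we produced the quasimorphism $\F b_x\colon\langle g\rangle\to\B R$ with $\F b_x(g^n)=-\int_{\eta_{x,n}}\alpha$, where $\eta_{x,n}$ is the concatenation of the paths $g^k(\eta_{x,1})$ for $k=0,\dots,n-1$, and we saw that
$$
\OP{\bf rot}_{x,\alpha}(g)=r_x(g)+\B Z,\qquad r_x(g)=\lim_{n\to\infty}\frac{\F b_x(g^n)}{n}.
$$
Hence it suffices to identify $\int_{\eta_{x,n}}\alpha$ with a difference of values of $F$ along a lift of $\eta_{x,n}$, and for that the only real choice to make is a compatible lift $\tilde g$ of $g$.

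First I would fix $\tilde x\in p^{-1}(x)$ and lift $\eta_{x,1}$ to a path $\tilde\eta_{x,1}$ in $X_{\F a}$ starting at $\tilde x$; its terminal point lies over $gx$. Among the $\B Z$-torsor of lifts of $g$ to $X_{\F a}$ there is exactly one, which I call $\tilde g$, with $\tilde g(\tilde x)$ equal to that terminal point. With this choice an easy induction on $n$ identifies the lift $\tilde\eta_{x,n}$ of $\eta_{x,n}$ starting at $\tilde x$ with the concatenation of the paths $\tilde g^k(\tilde\eta_{x,1})$, $k=0,\dots,n-1$: each $\tilde g^k(\tilde\eta_{x,1})$ is a lift of $g^k(\eta_{x,1})$, it starts at $\tilde g^k(\tilde x)$, i.e.\ where the previous segment ended, and by uniqueness of path lifting it is the next segment. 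In particular this lifted concatenation terminates at $\tilde g^n(\tilde x)$.

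Now, since $\OP{d}F=p^*\alpha$ and $p^*\alpha$ is exact on $X_{\F a}$, integrating it over $\tilde\eta_{x,n}$ is just evaluating $F$ at the endpoints:
$$
\int_{\eta_{x,n}}\alpha=\int_{\tilde\eta_{x,n}}p^*\alpha=\int_{\tilde\eta_{x,n}}\OP{d}F=F\bigl(\tilde g^n(\tilde x)\bigr)-F(\tilde x).
$$
(If one prefers to keep $\eta_{x,1}$ merely continuous this is the definition of the left-hand side; alternatively one may take $\eta_{x,1}$ smooth, $\alpha$ being closed.) Substituting into the formula for $r_x(g)$ and using $\F b_x(g^n)=-\int_{\eta_{x,n}}\alpha$ gives
$$
r_x(g)=\lim_{n\to\infty}\frac{F(\tilde x)-F(\tilde g^n(\tilde x))}{n}=-\lim_{n\to\infty}\frac{F(\tilde g^n(\tilde x))}{n},
$$
whence $\OP{\bf rot}_{x,\alpha}(g)=r_x(g)+\B Z=-\lim_{n\to\infty}F(\tilde g^n(\tilde x))/n+\B Z$, as claimed. (Under the standing assumption that $\F G_{x,\alpha}$ is bounded on $\langle g\rangle$ the limit on the right automatically exists, since $\F b_x$ is then a quasimorphism.)

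Finally I would note that the right-hand side is independent of the auxiliary choices, as it must be: replacing $\tilde x$ by another point of $p^{-1}(x)$, or $\tilde g$ by another lift of $g$, changes $F(\tilde g^n(\tilde x))$ by an integer, respectively by $n$ times an integer, because $\alpha$ has integral periods and so the deck group $\B Z$ acts on $F$ by translation by integers; neither affects the limit modulo $\B Z$. The dependence on $\eta_{x,1}$ was already dealt with in the discussion above. I do not anticipate any genuine obstacle: the only point requiring a little care is the compatible choice of $\tilde g$ making the lift of $\eta_{x,n}$ end exactly at $\tilde g^n(\tilde x)$, after which the argument is just the fundamental theorem of calculus for $F$.
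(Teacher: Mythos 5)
Your argument is correct and is exactly the route the paper intends: the proposition is stated with no separate proof because it "immediately follows" from the discussion establishing $\F b_x(g^n)=-\int_{\eta_{x,n}}\alpha$ and $r_x(g)=\lim_n\F b_x(g^n)/n$, and you simply make that explicit by lifting $\eta_{x,n}$ to $X_{\F a}$ and applying $\OP{d}F=p^*\alpha$ to get $\int_{\eta_{x,n}}\alpha=F(\tilde g^n(\tilde x))-F(\tilde x)$. Your extra care with the compatible choice of lift $\tilde g$ and the independence of the answer modulo $\B Z$ from the choices of $\tilde x$, $\tilde g$, and $\eta_{x,1}$ is a welcome tightening of what the paper leaves implicit.
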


%%%%%%%%%%%%%%%%%%%%%%%%%%%%%%%%%%%%%%%%%%%%%%%%%%%%%%%%%%%%%%
\subsection{Proof of Theorem \ref{T:rotation}}\label{SS:proof_rotation}
%%%%%%%%%%%%%%%%%%%%%%%%%%%%%%%%%%%%%%%%%%%%%%%%%%%%%%%%%%%%%%
In order to apply Theorem \ref{T:q-m} we need to prove that the quasimorphism 
$\F q\colon \langle g\rangle \to \B R$ from Proposition \ref{P:q-m} is unbounded.

Let $\gamma,\eta_{x,n},\eta_{y,n}\colon [0,1]\to X$
be  paths from $x$ to $y$, $x$ to $g^nx$ and $y$
to $g^ny$ respectively and $n\in \B Z$. As above assume that
$\eta_{x,n}$ is a concatenation of the paths $g^k (\eta_{x,1})$
for $k$ ranging from $0$ to $n-1$ and similarly for $\eta_{y,n}$.

Let $\F b_x,\F b_y\colon G\to \B R$ and $r_x(g), r_y(g)\in \B R$ be
defined as in \ref{Eq:b} and \ref{Eq:r}.
Let $\square_n$ be a concatenation of $-\gamma$, $\eta_{x,n}$,
$g^n \gamma$ and $-\eta_{y,n}$. We get the following computation.
\begin{align*}
\F q(g^n) &= \int_{\gamma} (g^n)^*\alpha - \alpha\\
&= \int_{\square_n}\alpha - \int_{\eta_{x,n}}\alpha + \int_{\eta_{y,n}}\alpha\\
&= n \int_{\square_1}\alpha + \F b_x(g^n) - \F b_y(g^n)\\
&= n \left( \int_{\square_1}\alpha + (r_x(g)-r_y(g))\right ) + O(1).
\end{align*}
Since $\alpha$ has integral periods and the difference $r_x(g)-r_y(g)$ is not an integer
by the hypothesis, we get that the quasimorphism $\F q$ is unbounded. 
\qed

%%%%%%%%%%%%%%%%%%%%%%%%%%%%%%%%%%%%%%%%%%%%%%%%%%%%%%%%%%%%%%%%
\section{Appendix: On singular one-cocycles}\label{S:singular}
%%%%%%%%%%%%%%%%%%%%%%%%%%%%%%%%%%%%%%%%%%%%%%%%%%%%%%%%%%%%%%%%
The results of this section are used to prove Lemma \ref{L:continuous}.

Let $\B A$ be an Abelian group which is a trivial
coefficient system over $X$.
Since $H^1(X;\B A)=\Hom(\pi_1(X),\B A)$ one can define a cover
$$
\B A\to X_\F a:=\widetilde X\times_{\pi_1(x)}\B A\to X,
$$
where $\widetilde X$ is the universal cover of $X$ and $\pi_1(x)$
acts on $\B A$ via homomorphism defined by $\F a$.
%Let $\B A\to X_{\F a}\stackrel{p}\to X$ be the covering associated
%with the cohomology class ${\F a}\in H^1(X;\B A)$. 
In what follows, the action $\B A\times X_{\F a}\to X_{\F a}$ by 
the deck transformations will be denoted additively: $(a,z)\mapsto a+z$.

Let $x\in X$ be a reference point in $X$ and let
$\tilde x\in p^{-1}(x)$ be a reference point
in $X_{\F a}$.
Let $\alpha$ be a singular cocycle representing the class $\F a$.
That is, $\alpha$ is a homomorphism $C_1(X;\B A)\to \B A$
defined on the group of chains on $X$ with the coefficients
in $\B A$. It defines an $\B A$-equivariant map
$\B a\colon X_\F a\to\B A$ in the following way.
Given a point $\tilde y\in p^{-1}(y)$ let $\gamma\colon [0,1]\to X$
be a path from $x$ to $y$. Let $\tilde\gamma\colon [0,1]\to X_{\F a}$
be its lift such that $\tilde\gamma(0)=\tilde x$.  Then
we define $\B a\left(\tilde y\right)$ as the unique element such that
$\int_\gamma\alpha+\tilde y
=\B a\left(\tilde y\right)+\tilde\gamma(1)$.
If we put $\tilde y := \tilde\gamma(1)$ we obtain that
$$
\B a(\tilde\gamma(1)) = \int_\gamma\alpha.
$$

Let us check that $\B a$ does not depend on the choice
of the path $\gamma$. Let $\gamma_\pm$ be two paths
from $x$ to $y$ and let $\B a_-$ and $\B a_+$
denote the corresponding maps.
By letting $\tilde y = \tilde\gamma_+(1)$ in the
equality
$$
\int_{\gamma_+}\alpha +\tilde\gamma_-(1)=
\int_{\gamma_-}\alpha +\tilde\gamma_+(1)
$$
we get
$$
\int_{\gamma_+}\alpha +\tilde\gamma_-(1)=
\int_{\gamma_-}\alpha +\tilde y
$$
which shows that
$\B a_+(\tilde y)=\int_{\gamma_+}\alpha = \B a_-(\tilde y)$
as claimed.

The equivariance of $\B a$ is immediate from the definition.
Another choice of a reference point results in changing $\B a$
by an additive constant.

Let $\B a\colon X_\F a\to\B A$ be an $\B A$-equivariant function.
Let $\gamma\colon [0,1]\to X$ be a path and let
$\tilde\gamma\colon [0,1]\to X_a$ be its lift. The following
formula defines a singular one-cocycle with values in $\B A$.
$$
\int_\gamma\alpha=
\B a\left(\tilde\gamma(1)\right)-
\B a\left(\tilde\gamma(0)\right)
$$

\begin{lemma}\label{L:singular}
The above constructions are inverse to each other and
hence provide a bijective correspondence between
singular one-cocycles in the class $\F a\in H^1(X,\B A)$
and $\B A$-equivariant maps $\B a\colon X_\F a\to \B A$
up to the constants.
\end{lemma}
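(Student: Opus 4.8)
The plan is to treat the two assertions -- well-definedness and mutual inversion -- separately, the core of both being a telescoping identity along concatenated paths.

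First I would confirm that the second construction, sending an $\B A$-equivariant map $\B a\colon X_\F a\to\B A$ to the assignment $\gamma\mapsto\B a(\tilde\gamma(1))-\B a(\tilde\gamma(0))$, really produces a singular one-cocycle \emph{representing $\F a$}. Independence of the chosen lift $\tilde\gamma$ is immediate: any other lift has the form $c+\tilde\gamma$ for some $c\in\B A$, and $c$ cancels in the difference. For the cocycle identity $\delta\alpha=0$ I would use that a singular $2$-simplex is simply connected, hence lifts to $X_\F a$; its boundary $1$-chain therefore lifts to a $1$-cycle in $X_\F a$, along which the alternating sum of the corresponding values of $\B a$ telescopes to $0$. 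That $[\alpha]=\F a$ follows from the defining property of the cover: for a loop $\gamma$ based at $x$, the lift starting at $\tilde x$ ends at $\F a([\gamma])+\tilde x$, so $\int_\gamma\alpha=\B a(\F a([\gamma])+\tilde x)-\B a(\tilde x)=\F a([\gamma])$ by equivariance. Since adding a constant to $\B a$ visibly leaves the cocycle unchanged, the second construction descends to a well-defined map on equivalence classes; the corresponding facts for the first construction (independence of $\B a$, up to a constant, under change of reference point) have already been recorded above.

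Next I would check the two composites. For ``first construction, then second'': given a path $\gamma$ from $w$ to $z$, pick a path $\sigma$ from $x$ to $w$, lift $\sigma*\gamma$ starting at $\tilde x$, and use the characterization $\B a(\tilde\sigma(1))=\int_\sigma\alpha$ together with $\B a((\widetilde{\sigma*\gamma})(1))=\int_{\sigma*\gamma}\alpha=\int_\sigma\alpha+\int_\gamma\alpha$; the difference of these two values of $\B a$ is exactly $\int_\gamma\alpha$, and since the second construction is independent of the lift, the composite returns $\alpha$ itself. For ``second construction, then first'': starting from $\B a$, form $\alpha$ and then the equivariant map $\B a'$ attached to a reference point $\tilde x$; for every path $\gamma$ from $x$ one gets $\B a'(\tilde\gamma(1))=\int_\gamma\alpha=\B a(\tilde\gamma(1))-\B a(\tilde x)$, so $\B a'=\B a-\B a(\tilde x)$, which is $\B a$ up to a constant. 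By path-connectedness every point of $X_\F a$ arises as such an endpoint, so this determines $\B a'$ completely.

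The only points needing attention, rather than genuine difficulty, are the well-definedness claims: that the second construction lands among cocycles \emph{in the prescribed class $\F a$} (the monodromy computation), and that both constructions are compatible with the ``up to constants'' identification. I expect the main -- still routine -- obstacle to be bookkeeping: keeping the two reference points $x\in X$ and $\tilde x\in X_\F a$ and the additive $\B A$-action straight while writing the telescoping identities. Once the notation is pinned down, each verification is essentially one line.
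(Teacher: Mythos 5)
Your proposal is correct and follows essentially the same route as the paper's proof: both composites are checked by lifting paths normalized at the reference point and letting $\B A$-equivariance absorb the ambiguity, and you additionally spell out (as the paper does only in the discussion preceding the lemma) that the second construction yields a cocycle in the class $\F a$ and that a general path is handled by concatenating with a path from the base point. One small caveat: $X_{\F a}$ is path-connected only when the holonomy $\pi_1(X)\to\B A$ determined by $\F a$ is surjective, so in your final step you should argue that every point of $X_{\F a}$ lies in the $\B A$-orbit of the path component of $\tilde x$ and then invoke equivariance of both maps, rather than appealing to path-connectedness of $X_{\F a}$.
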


\begin{proof}
Let $\alpha $ be a singular one-cocycle representing the class $\F a$.
It defines an equivariant map $\B a\colon X_{\F a}\to \B A$
such that $\int_\gamma\alpha+\tilde y
=\B a\left(\tilde y\right)+\tilde\gamma(1)$
for every path $\gamma\colon [0,1]\to X$ from $x$ to $y$.
We need to check that
$
\int_\gamma\alpha=
\B a\left(\tilde\gamma(1)\right)-
\B a\left(\tilde\gamma(0)\right)
$.

Let $\tilde y:=\tilde\gamma(1)$ where the lift
$\tilde\gamma$ is chosen so that $\B a(\tilde\gamma(0))=0$.
Then
$$
\int_\gamma\alpha+\tilde\gamma(1)=
\B a\left(\tilde\gamma(1)\right)+\tilde\gamma(1)
$$
implies that $
\int_\gamma\alpha=\B a\left(\tilde\gamma(1)\right)$.

Conversely, let $\B a\colon X_{\F a}\to \B A$ be an $\B A$-equivariant
map. It defines a singular cocycle $\alpha$ by the identity
$\int_\gamma\alpha = \B a(\tilde\gamma(1))$, where
$\tilde\gamma$ is a lift of $\gamma$ such that
$\B a(\tilde\gamma(0))=0$. We then clearly get
that $\int_\gamma\alpha+\tilde\gamma(1)=
\B a\left(\tilde\gamma(1)\right)+\tilde\gamma(1)$.
\end{proof}

%%%%%%%%%%%%%%%%%%%%%%%%%%%%%%%%%%%%%%%%%%%%%%%%%%%%%%%%%%%%%%%%
\subsection*{Acknowledgements}
%%%%%%%%%%%%%%%%%%%%%%%%%%%%%%%%%%%%%%%%%%%%%%%%%%%%%%%%%%%%%%%%
The authors thank Alessandra Iozzi, Assaf Libman, Aleksy Tral\-le,
and anonymous referee for helpful comments and discussions.

\'S.R. Gal is partially supported by Polish MNiSW grant N N201 541738
and Swiss NSF Sinergia Grant CRSI22-130435.

\bibliography{../../bib/bibliography}

\def\polhk#1{\setbox0=\hbox{#1}{\ooalign{\hidewidth
  \lower1.5ex\hbox{`}\hidewidth\crcr\unhbox0}}}
  \def\polhk#1{\setbox0=\hbox{#1}{\ooalign{\hidewidth
  \lower1.5ex\hbox{`}\hidewidth\crcr\unhbox0}}}
  \def\polhk#1{\setbox0=\hbox{#1}{\ooalign{\hidewidth
  \lower1.5ex\hbox{`}\hidewidth\crcr\unhbox0}}} \def\cprime{$'$}
\begin{thebibliography}{10}

\bibitem{MR1612569}
{\sc Arnold, V.~I., and Khesin, B.~A.}
\newblock {\em Topological methods in hydrodynamics}, vol.~125 of {\em Applied
  Mathematical Sciences}.
\newblock Springer-Verlag, New York, 1998.

\bibitem{MR2680425}
{\sc Burger, M., Iozzi, A., and Wienhard, A.}
\newblock Surface group representations with maximal {T}oledo invariant.
\newblock {\em Ann. of Math. (2) 172}, 1 (2010), 517--566.

\bibitem{MR2527432}
{\sc Calegari, D.}
\newblock {\em scl}, vol.~20 of {\em MSJ Memoirs}.
\newblock Mathematical Society of Japan, Tokyo, 2009.

\bibitem{MR2207794}
{\sc Calegari, D., and Freedman, M.~H.}
\newblock Distortion in transformation groups.
\newblock {\em Geom. Topol. 10\/} (2006), 267--293.
\newblock With an appendix by Yves de Cornulier.

\bibitem{MR1325916}
{\sc Franks, J.}
\newblock Rotation vectors and fixed points of area preserving surface
  diffeomorphisms.
\newblock {\em Trans. Amer. Math. Soc. 348}, 7 (1996), 2637--2662.

\bibitem{MR2219247}
{\sc Franks, J., and Handel, M.}
\newblock Distortion elements in group actions on surfaces.
\newblock {\em Duke Math. J. 131}, 3 (2006), 441--468.

\bibitem{MR2770429}
{\sc Gal, {\'S}.~R., and K{\k e}dra, J.}
\newblock A cocycle on the group of symplectic diffeomorphisms.
\newblock {\em Advances in Geometry 11}, 1 (2011), 73--88.

\bibitem{GK2}
{\sc Gal, {\'S}.~R., and K{\k e}dra, J.}
\newblock A two-cocycle on the group of symplectic diffeomorphisms.
\newblock {\em Math. Z. to appear\/} (arXiv:1010.0658).

\bibitem{MR1452855}
{\sc Gambaudo, J.-M., and Ghys, {\'E}.}
\newblock Enlacements asymptotiques.
\newblock {\em Topology 36}, 6 (1997), 1355--1379.

\bibitem{MR2104597}
{\sc Gambaudo, J.-M., and Ghys, {\'E}.}
\newblock Commutators and diffeomorphisms of surfaces.
\newblock {\em Ergodic Theory Dynam. Systems 24}, 5 (2004), 1591--1617.

\bibitem{MR1876932}
{\sc Ghys, {\'E}.}
\newblock Groups acting on the circle.
\newblock {\em Enseign. Math. (2) 47}, 3-4 (2001), 329--407.

\bibitem{MR686042}
{\sc Gromov, M.}
\newblock Volume and bounded cohomology.
\newblock {\em Inst. Hautes \'Etudes Sci. Publ. Math.}, 56 (1982), 5--99
  (1983).

\bibitem{MR2270616}
{\sc Ismagilov, R.~S., Losik, M., and Michor, P.~W.}
\newblock A 2-cocycle on a symplectomorphism group.
\newblock {\em Mosc. Math. J. 6}, 2 (2006), 307--315, 407.

\bibitem{MR1828742}
{\sc Lubotzky, A., Mozes, S., and Raghunathan, M.~S.}
\newblock The word and {R}iemannian metrics on lattices of semisimple groups.
\newblock {\em Inst. Hautes \'Etudes Sci. Publ. Math.}, 91 (2000), 5--53
  (2001).

\bibitem{MR1840942}
{\sc Monod, N.}
\newblock {\em Continuous bounded cohomology of locally compact groups},
  vol.~1758 of {\em Lecture Notes in Mathematics}.
\newblock Springer-Verlag, Berlin, 2001.

\bibitem{MR2003i:53126}
{\sc Polterovich, L.}
\newblock Growth of maps, distortion in groups and symplectic geometry.
\newblock {\em Invent. Math. 150}, 3 (2002), 655--686.

\end{thebibliography}
\bibliographystyle{acm}

\end{document}